\newcommand{\Rmnum}[1]{\expandafter\@slowromancap\romannumeral #1@}
\begin{document}
	
	\pagestyle{myheadings} \markright{\sc Packing of spanning mixed arborescences\hfill} \thispagestyle{empty}
	
	\newtheorem{theorem}{Theorem}[section]
	\newtheorem{corollary}[theorem]{Corollary}
	\newtheorem{definition}[theorem]{Definition}
	\newtheorem{guess}[theorem]{Conjecture}
	\newtheorem{claim}[theorem]{Claim}
	\newtheorem{problem}[theorem]{Problem}
	\newtheorem{question}[theorem]{Question}
	\newtheorem{lemma}[theorem]{Lemma}
	\newtheorem{proposition}[theorem]{Proposition}
	\newtheorem{fact}[theorem]{Fact}
	\newtheorem{acknowledgement}[theorem]{Acknowledgement}
	\newtheorem{algorithm}[theorem]{Algorithm}
	\newtheorem{axiom}[theorem]{Axiom}
	\newtheorem{case}[theorem]{Case}
	\newtheorem{conclusion}[theorem]{Conclusion}
	\newtheorem{condition}[theorem]{Condition}
	\newtheorem{conjecture}[theorem]{Conjecture}
	\newtheorem{criterion}[theorem]{Criterion}
	\newtheorem{example}[theorem]{Example}
	\newtheorem{exercise}[theorem]{Exercise}
	\newtheorem{notation}[theorem]{Notation}
	\newtheorem{observation}[theorem]{Observation}
	\newtheorem{solution}[theorem]{Solution}
	\newtheorem{summary}[theorem]{Summary}
	
	\newtheorem{thm}[theorem]{Theorem}
	\newtheorem{prop}[theorem]{Proposition}
	\newtheorem{defn}[theorem]{Definition}

	\newtheorem{lem}[theorem]{Lemma}
	\newtheorem{con}[theorem]{Conjecture}
	\newtheorem{cor}[theorem]{Corollary}

	\newenvironment{proof}{\noindent {\bf
			Proof.}}{\rule{3mm}{3mm}\par\medskip}
	\newcommand{\remark}{\medskip\par\noindent {\bf Remark.~~}}
	\newcommand{\pp}{{\it p.}}
	\newcommand{\de}{\em}

	\newcommand{\g}{\mathrm{g}}

	\newcommand{\qf}{Q({\cal F},s)}
	\newcommand{\qff}{Q({\cal F}',s)}
	\newcommand{\qfff}{Q({\cal F}'',s)}
	\newcommand{\f}{{\cal F}}
	\newcommand{\ff}{{\cal F}'}
	\newcommand{\fff}{{\cal F}''}
	\newcommand{\fs}{{\cal F},s}
	\newcommand{\cs}{\chi'_s(G)}
	
	\newcommand{\G}{\Gamma}
	\newcommand{\wrt}{with respect to }
	\newcommand{\mad}{{\rm mad}}
	\newcommand{\col}{{\rm col}}
	\newcommand{\gcol}{{\rm gcol}}
	
	\newcommand*{\ch}{{\rm ch}}
	\newcommand*{\ra}{{\rm ran}}
	\newcommand{\co}{{\rm col}}
	\newcommand{\sco}{{\rm scol}}
	\newcommand{\wc}{{\rm wcol}}
	\newcommand{\dc}{{\rm dcol}}
	\newcommand*{\ar}{{\rm arb}}
	\newcommand*{\ma}{{\rm mad}}
	\newcommand{\di}{{\rm dist}}
	\newcommand{\tw}{{\rm tw}}
	\newcommand{\scol}{{\rm scol}}
	\newcommand{\wcol}{{\rm wcol}}
	\newcommand{\td}{{\rm td}}
	\newcommand{\edp}[2]{#1^{[\natural #2]}}
	\newcommand{\epp}[2]{#1^{\natural #2}}
	\newcommand*{\ind}{{\rm ind}}
	\newcommand{\red}[1]{\textcolor{red}{#1}}
	
	\def\C#1{|#1|}
	\def\E#1{|E(#1)|}
	\def\V#1{|V(#1)|}
	\def\iarb{\Upsilon}
	\def\ipac{\nu}
	\def\nul{\varnothing}

	\newcommand*{\QEDA}{\ensuremath{\blacksquare}}
	\newcommand*{\QEDB}{\hfill\ensuremath{\square}}

	\title{\Large\bf  Packing of spanning mixed arborescences}
	
	\author{Hui Gao \\
		Center for Discrete Mathematics\\
		Fuzhou University\\
		Fuzhou, Fujian 350108, China\\
		E-mail: \texttt{gaoh1118@yeah.net}  
		\and
		Daqing Yang\thanks{Corresponding author,  grant number:	NSFC  11871439.} \\
		Department of Mathematics \\
		Zhejiang Normal University \\
		Jinhua, Zhejiang 321004, China\\
		E-mail: \texttt{dyang@zjnu.edu.cn} 
	}

	\maketitle 

\begin{abstract}
	In this paper, we characterize a mixed graph $F$ which contains 
	 $k$ edge and arc disjoint spanning mixed arborescences $F_{1}, \ldots, F_{k}$, such that for each $v \in V(F)$, the cardinality of $\{i \in [k]: v \text{ is the root of } F_{i}\}$ lies in some prescribed interval. 
	 This generalizes both Nash-Williams and Tutte's theorem on spanning tree packing for undirected graphs  and the previous characterization on digraphs which was given by 
	 Cai [in: Arc-disjoint arborescences of digraphs, J. Graph Theory 7(2) (1983), 235-240] and Frank [in: On disjoint trees and arborescences,  Algebraic Methods in Graph Theory, Colloquia Mathematica Soc. J. Bolyai, Vol. 25 (North-Holland, Amsterdam) (1978), 159-169].   
\end{abstract}

{\em Keywords: Tree; Arborescence; Packing; Digraph; Mixed graph} 

{\em AMS subject classifications.  05B35, 05C40, 05C70}

\section{Introduction} 
In this paper, we consider graphs which may have multiple edges or (and) arcs but not loops. 
\emph{A mixed graph} $F = (V ; E ,A)$ is a graph consisting of the set $E$ of undirected edges and the set $A$ of directed arcs. 
Let $X_{1}, \ldots, X_{t}$ be disjoint subsets of $V$, we call $\mathcal{P}= \{X_{1}, \ldots, X_{t} \}$ a {\em subpartition of $V$} and particularly a {\em partition of  $V$} if $V=\cup_{j=1}^{t}X_{j}$. 
Denote $e_{E}(\mathcal{P})= |\{e \in E: $ one end of $e$ belongs to some $X_{i}$ and the other end belongs to another $X_{j}$ with $j \neq i$ or $V \setminus \cup_{j=1}^{t}X_{j}$  $ \}|$.   
Denote the set $\{1,\ldots,k\}$ by $[k]$.  
For a function $f: V \rightarrow \mathbb{N}$, define a set function $\widetilde{f}: 2^{V} \rightarrow \mathbb{N}$ as $\widetilde{f}(X)=\sum_{x \in X} f(x)$, where $X \subseteq V$.  

Nash-Williams~\cite{NW61} and Tutte~\cite{T} independently characterized
when an undirected graph has $k$ edge-disjoint spanning trees. 


\begin{thm} [\cite{NW61,T}]  \label{packingtrees} 
For a graph $G=(V,E)$, there exist $k$ edge disjoint spanning trees, if and only if for any partition $\mathcal{P} = \{X_{0}, X_{1}, \ldots, X_{t} \}$  of $V$, 
\[
e_{E}(\mathcal{P}) \geq kt.
\]
\end{thm}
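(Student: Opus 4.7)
The plan is to prove necessity directly and sufficiency via Edmonds' matroid partition theorem applied to the graphic matroid of $G$.

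For necessity, given $k$ edge-disjoint spanning trees $T_1,\ldots,T_k$ and a partition $\mathcal{P}=\{X_0,X_1,\ldots,X_t\}$ of $V$, I would contract each class $X_i$ to a single vertex and observe that the image of each $T_i$ is a connected spanning subgraph on $t+1$ vertices, hence uses at least $t$ edges of $T_i$ whose endpoints lie in different classes. Edge-disjointness of the $T_i$'s then yields $e_E(\mathcal{P})\geq kt$.

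For sufficiency, let $M(G)$ denote the graphic matroid on $E$, whose rank function is $r(F)=|V|-c(V,F)$, where $c(V,F)$ counts the connected components of the spanning subgraph $(V,F)$. The existence of $k$ edge-disjoint spanning trees is equivalent to $E$ containing $k$ disjoint bases of $M(G)$, which by Edmonds' matroid partition (union) theorem is in turn equivalent to
\[
|E\setminus F|\,\geq\, k\bigl(c(V,F)-1\bigr) \qquad \text{for every } F\subseteq E.
\]
I would then translate this matroid inequality into the stated partition inequality via the natural dictionary between edge subsets and partitions. Given $F$, let $\mathcal{P}$ be the partition of $V$ into the components of $(V,F)$: every edge of $E\setminus F$ either lies inside a component or crosses $\mathcal{P}$, so $e_E(\mathcal{P})\leq |E\setminus F|$, and the partition hypothesis applied with $t=c(V,F)-1$ supplies the matroid inequality. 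Conversely, given a partition $\mathcal{P}=\{X_0,\ldots,X_t\}$, let $F$ be the set of edges of $G$ with both endpoints in a common class; then $c(V,F)\geq t+1$ and $|E\setminus F|=e_E(\mathcal{P})$, so the matroid inequality specialized to this $F$ yields $e_E(\mathcal{P})\geq kt$.

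The main obstacle is the matroid partition theorem itself, which is the substantive ingredient; the remainder is essentially book-keeping that converts statements about edge subsets and components into statements about partitions and crossing edges. An alternative, more self-contained route would be a direct exchange argument on a maximum packing of edge-disjoint forests, augmenting a deficient forest by swapping edges through the others and using the partition hypothesis to derive a contradiction when no swap is possible; but this essentially re-proves matroid union in disguise, so I would prefer to cite the matroid result and focus on the translation step.
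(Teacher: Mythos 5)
Your proof is correct, but it takes a genuinely different route from the paper. The paper does not prove Theorem~\ref{packingtrees} at all: it quotes it from Nash-Williams and Tutte, and recovers it as the special case $A=\emptyset$, $f\equiv 0$, $g\equiv k$ of Theorem~\ref{mix-spanning}, whose proof runs by induction on $|E|$, orienting one undirected edge at a time so that the subpartitions that are tight for (\ref{ine-f}) and (\ref{ine-g}) remain satisfied, and then invoking the directed theorem of Cai and Frank (Theorem~\ref{Cai-21}) in the base case $E=\emptyset$. You instead use the classical derivation via Edmonds' matroid union theorem for the graphic matroid, together with the standard dictionary between an edge set $F$ and the partition of $V$ into the components of $(V,F)$; your contraction argument for necessity is also fine. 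The one point you should make explicit is the gap between bases of $M(G)$ (spanning forests) and spanning trees: your equivalence and the identity $r(E)=|V|-1$ implicitly assume $G$ is connected, which for $k\ge 1$ does follow from the partition condition applied to the partition of $V$ into the connected components of $G$ (and $k=0$ is trivial), so this is a one-sentence repair rather than a real flaw. What your approach buys is brevity and reliance on a single standard theorem; what the paper's heavier machinery buys is the stronger statement, namely the mixed-graph, root-count-constrained generalization, of which Theorem~\ref{packingtrees} is only a corollary.
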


Let $D=(V,A)$ be a digraph.  A subdigraph of $D$ is  spanning if its vertex set is $V$.  A subdigraph $F$ (it may not be spanning) of $D$ is an {\em $r$-arborescence} if its underlying graph is a tree and for any $u \in V(F)$, there is exactly one directed path in $F$ from $r$ to $u$,  vertex $r$ is the root of arborescence $F$.

As a directed version of Theorem \ref{packingtrees}, Edmonds' theorem \cite{edmonds} characterizes directed graphs that contain $k$ arc disjoint spanning arborescences with prescribed roots in terms of a cut condition. 


\begin{thm}[\cite{edmonds}]\label{spanningarb}
For a digraph $D = (V,A)$, let $R = \{r_{1}, \ldots, r_{k} \} \subseteq V$ be a multiset. 
For $i = 1,\ldots,k$, there exist arc-disjoint spanning $r_{i}$-arborescneces in $D$,  if and only if for any $ \emptyset \neq X \subseteq V$,
\[
d_{A}^{-}(X) \geq |\{r_{i}: r_{i} \notin X \}|. 
\]
\end{thm}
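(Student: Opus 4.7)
For any nonempty $X \subseteq V$ and any family of arc-disjoint spanning $r_i$-arborescences $T_1, \ldots, T_k$, each index $i$ with $r_i \notin X$ forces $T_i$ to contain at least one arc entering $X$: since $T_i$ spans $V$, the unique directed $r_i$-to-$v$ path in $T_i$ for any $v \in X$ must cross the boundary of $X$, and its first arc inside $X$ enters $X$. Since the $T_i$ are arc-disjoint, these boundary arcs are distinct across indices, yielding $d_A^-(X) \geq |\{r_i : r_i \notin X\}|$.

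\textbf{Sufficiency.} The plan is to follow Lov\'asz's augmenting-style argument. Call $(F_1, \ldots, F_k)$ a \emph{partial configuration} if each $F_i$ is an $r_i$-arborescence (possibly trivial, i.e., consisting of just the root) and the $F_i$ are pairwise arc-disjoint. Starting from the configuration $V(F_i) = \{r_i\}$ for each $i$, I would repeatedly try to strictly increase $\sum_i |V(F_i)|$, and argue that the process can only terminate when every $F_i$ spans $V$.

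The core lemma to establish is: \emph{if $(F_1, \ldots, F_k)$ is a partial configuration in which some $F_j$ is not spanning, and $D$ satisfies the cut condition of the theorem, then there is another partial configuration $(F_1', \ldots, F_k')$ with $\sum_i |V(F_i')| > \sum_i |V(F_i)|$}. Granted this lemma, iterating proves sufficiency. To prove the lemma, assume for contradiction that no such augmentation is possible, and consider the family $\mathcal{X}$ of subsets $X \subseteq V \setminus V(F_j)$ which are ``blocked,'' meaning every arc of $D$ entering $X$ either lies in some $F_i$ with $V(F_i) \cap X = \emptyset$, or lies in an $F_i$ whose structure forbids a swap that would free an arc into $X$. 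Maximality makes $\mathcal{X}$ nonempty (one can take $X = V \setminus V(F_j)$ after an inspection of the possible arc exchanges). Using the submodularity of $d_A^-$, pick a minimal $X \in \mathcal{X}$. By tracking which arborescences are allowed to contribute in-arcs to $X$ (only those $F_i$ with $r_i \notin X$), and using minimality of $X$ to rule out the possibility of an in-arc being redundant, one derives $d_A^-(X) < |\{r_i : r_i \notin X\}|$, contradicting the cut condition.

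The main obstacle is the arc-exchange step inside the proof of the key lemma: formalizing the notion of a ``blocked'' set so that (i) maximality of the configuration indeed produces one, and (ii) minimality forces the count of distinct in-arcs at $X$ to fall strictly below $|\{r_i : r_i \notin X\}|$. This is Lov\'asz's combinatorial heart of Edmonds' theorem, and the careful bookkeeping of which $F_i$'s branches hit $X$ is what makes the argument delicate. Once this lemma is in place, the overall structure is a clean induction on $k|V| - \sum_i |V(F_i)|$.
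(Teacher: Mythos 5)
Your necessity argument is correct and is the standard one. The sufficiency direction, however, has a genuine gap: the entire content of Edmonds' theorem is concentrated in the ``core lemma'' you state (every partial arc-disjoint packing in which some $F_j$ fails to span can be replaced by one of strictly larger total size), and you do not prove it. As you set things up, the all-spanning packing has total size $k|V|$, so this lemma is essentially equivalent to the theorem itself; the induction on $k|V|-\sum_i|V(F_i)|$ carries no weight unless the augmentation step is actually established. But the augmentation step is only sketched: the notion of a ``blocked'' set is not well defined (``lies in an $F_i$ whose structure forbids a swap that would free an arc into $X$'' is circular until you specify exactly which swaps are admissible), the claim that maximality of the configuration produces a blocked set is asserted without argument, and the final counting step --- that minimality of $X$ forces $d_A^-(X) < |\{r_i : r_i \notin X\}|$ --- is exactly the delicate exchange argument you acknowledge you have not formalized. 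Conceding that this is ``the main obstacle'' is an accurate self-assessment: without it there is no proof of sufficiency.

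For what it is worth, the paper does not prove this statement; it is quoted as Edmonds' classical theorem. If you want to complete your argument, the cleanest route is Lov\'asz's, which is structured differently from your simultaneous augmentation: induct on $k$, grow a single partial $r_1$-arborescence $F_1$ arc by arc while maintaining that the residual digraph satisfies the cut condition for the remaining roots (together with the not-yet-reached part of $F_1$), and when no arc can be added, take a \emph{maximal} tight set avoiding obstruction and use submodularity of $d_A^-$ to exhibit an addable arc, contradicting maximality. Completing $F_1$ first and then applying induction to $D - A(F_1)$ avoids the simultaneous bookkeeping over all $k$ arborescences that your blocked-set approach would require, which is precisely where your sketch stalls.
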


Throughout this paper, $F = (V ; E ,A)$ is a mixed graph,  $R=\{r_{1}, \ldots, r_{k}\} \subseteq V$ is a multiset. 
By regarding each undirected edge as a directed arc in both directions, each concept in directed graphs can be naturally extended to mixed graphs. Especially, a subdigraph $P$ of $F$ is a {\em mixed path} if its underlying graph is a path and one end of $P$ can be reached from the other. 
A subdigraph $T$ (it may not be spanning) of $F$ is called an \emph{$r$-mixed arborescence} if its underlying graph is a tree and for any $u \in V(T)$, there is exactly one mixed path in $T$ from $r$ to $u$. Equivalently, a subgraph $T$ of $F$ is an $r$-mixed arborescence if there exists an orientation of the undirected edges of $T$ such that the obtained subgraph (whose arc set is the union of original arc set and oriented arc set of $T$) is an $r$-arborescence.

The following result is due to Frank \cite{F-13}, it generalized Theorem~\ref{packingtrees} and Theorem~\ref{spanningarb} to mixed graphs when $r_{1}=r_{2}= \dots = r_{k}$. 
Since mixed graphs may contain edges and arcs, a property that holds for mixed graphs should hold for both directed and undirected graphs.

\begin{thm}[\cite{F-13}]
Let $F = (V; E,A)$ be a mixed graph, $r \in V$, and $k$ a positive integer.  
There exist $k$ edge and arc disjoint spanning $r$-mixed arborescences in $F$, if and only if,
for any subpartition $\{ X_{1}, \ldots, X_{t} \}$ of $V-r$,
\[
e_{E}(\mathcal{P}) + \sum_{j=1}^{t}d_{A}^{-}(X_{j}) \geq kt.
\]
\end{thm}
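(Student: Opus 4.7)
\textbf{Necessity} is a simple counting argument. Suppose $F_{1},\dots,F_{k}$ are pairwise edge- and arc-disjoint spanning $r$-mixed arborescences and $\mathcal{P}=\{X_{1},\dots,X_{t}\}$ is a subpartition of $V-r$. Each $F_{i}$ contains a mixed path from $r$ to every vertex of $X_{j}$, so some edge or arc of $F_{i}$ enters $X_{j}$. Summing over $i,j$ gives $kt$ entering incidences, and by disjointness each arc of $A$ contributes at most once to $\sum_{j}d_{A}^{-}(X_{j})$ and each edge of $E$ crossing the subpartition contributes at most once to $e_{E}(\mathcal{P})$, giving the desired inequality.

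\textbf{Sufficiency: plan.} I would reduce to Edmonds' Theorem~\ref{spanningarb} by constructing an orientation $\vec{E}$ of the undirected edges so that $D=(V,A\cup\vec{E})$ satisfies $d_{D}^{-}(X)\geq k$ for every $\emptyset\neq X\subseteq V-r$. Since in any $r$-arborescence an undirected edge at $r$ must be oriented out of $r$, I pre-orient all such edges this way; this preserves $e_{E}(\mathcal{P})+\sum_{j}d_{A}^{-}(X_{j})$ for every subpartition of $V-r$, so I may assume every undirected edge lies inside $V-r$. Define the deficit demand $p(X):=\max\{0,\,k-d_{A}^{-}(X)\}$ for $\emptyset\neq X\subseteq V-r$ and $p\equiv 0$ otherwise. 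Submodularity of $d_{A}^{-}$ makes $p$ positively crossing supermodular: when $p(X),p(Y)>0$ and $X\cap Y\neq\emptyset$, both $X,Y\subseteq V-r$, so
\[
p(X)+p(Y)=2k-d_{A}^{-}(X)-d_{A}^{-}(Y)\leq 2k-d_{A}^{-}(X\cap Y)-d_{A}^{-}(X\cup Y)\leq p(X\cap Y)+p(X\cup Y).
\]

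\textbf{Finishing via the orientation theorem.} Frank's orientation theorem for positively crossing supermodular demands yields an orientation $\vec{E}$ with $d_{\vec{E}}^{-}(X)\geq p(X)$ for all $X$, provided $e_{E}(\mathcal{P})\geq\sum_{j}p(X_{j})$ holds for every subpartition $\mathcal{P}$ of $V$. Given such a $\mathcal{P}$, drop those parts on which $p$ vanishes (parts containing $r$ or with $d_{A}^{-}(X_{j})\geq k$); a routine case analysis shows dropping parts can only decrease $e_{E}$ while $\sum_j p(X_j)$ is unchanged, so it suffices to verify the inequality on the restricted subpartition $\mathcal{P}'\subseteq V-r$, where it reads $e_{E}(\mathcal{P}')\geq\sum_{j}(k-d_{A}^{-}(X_{j}))$, precisely the present theorem's hypothesis. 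The produced $\vec{E}$ yields $d_{D}^{-}(X)\geq d_{A}^{-}(X)+p(X)\geq k$ on $V-r$, so applying Theorem~\ref{spanningarb} with root multiset $R=\{r,\dots,r\}$ produces $k$ arc-disjoint spanning $r$-arborescences in $D$; forgetting the chosen orientation on $\vec{E}$ converts them into the desired spanning $r$-mixed arborescences in $F$.

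\textbf{Main obstacle.} The hardest ingredient is the black-box appeal to the positively crossing supermodular orientation theorem, whose own proof usually goes through submodular flows or matroid intersection. A self-contained alternative would be induction on $|E|$: at each step orient a single undirected edge $uv$, using a submodular uncrossing argument to show that at least one of its two orientations preserves the theorem's hypothesis, with the base case $E=\emptyset$ being exactly Edmonds' theorem. The uncrossing step---which combines two hypothetical violating subpartitions into a pair whose intersection/union contradicts submodularity of $d_{A}^{-}$---is the combinatorial heart of the proof either way.
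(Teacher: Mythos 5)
Your proposal is correct in outline, and it is worth noting that the paper itself gives no proof of this statement: it is quoted from Frank \cite{F-13}, and inside the paper it appears only as a special case of Theorem~\ref{mix-spanning} (take $g(r)=k$, $g(v)=0$ for $v\neq r$, $f\equiv 0$). Your route --- pre-orient the edges at $r$, set $p(X)=\max\{0,\,k-d_{A}^{-}(X)\}$ for nonempty $X\subseteq V-r$, obtain an orientation of $E$ covering $p$ from Frank's orientation theorem, and finish with Edmonds' Theorem~\ref{spanningarb} applied to $R=\{r,\dots,r\}$ --- is essentially Frank's original argument, and the steps you actually verify (the counting for necessity, supermodularity of $p$ on intersecting pairs with positive values, reduction of the subpartition condition for $p$ to the theorem's hypothesis by discarding parts with $p=0$, and the translation of the resulting arborescences back into mixed arborescences) are all sound. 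One caveat on the black box: the theorem whose feasibility condition consists only of the subpartition inequality is the orientation theorem for \emph{positively intersecting} supermodular demands; the \emph{crossing} version you name also demands the co-partition condition $e_{E}(\mathcal{P})\geq\sum_{i}p(V\setminus V_{i})$ over partitions, which you never check (it does follow here, since every complement $V\setminus V_i$ except one contains $r$ and hence has $p=0$, so the surviving term is again an instance of the hypothesis) --- since your $p$ vanishes on every set containing $r$, citing the intersecting version is both correct and cleaner. By contrast, the paper's own technique for this family of results, namely the proof of Theorem~\ref{mix-spanning}, is exactly the ``self-contained alternative'' you sketch at the end: induction on $|E|$, orienting a single edge so that every tight subpartition (the families $\mathcal{E}^{1},\mathcal{E}^{2}$) keeps its slack, with the uncrossing carried out by PIEOs and the directed Theorem~\ref{Cai-21} as the base case. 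The orientation-theorem reduction is shorter but imports submodular-flow/orientation machinery; the edge-by-edge induction is elementary and, unlike the reduction, extends to the root-constrained setting with general $f\leq g$.
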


For nonempty $X,Z \subseteq V$,  let $Z \rightarrow X$ denote that $X$ and $Z$ are disjoint and $X$ is {\em reachable} from $Z$, that is, there is a mixed path from $Z$ to $X$.
We shall  write $v$ for $\{v\}$ for simplicity.
Let $P(X):= X \cup \{v \in V \setminus X : v \rightarrow X \}$.

Let $D$ and $R$ be as in Theorem~\ref{spanningarb}. 
The following remarkable extension of Edmonds' theorem (by Kamiyama, Katoh and Takizawa \cite{kamiyama}) enables us to find a 
{\em packing of reachable arborescences in digraph $D$ for $R$}, that is packing  arc disjoint $r_{i}$-arborescences $F_{i}$ in $D$ for $1 \leq i \leq k$ such that $V(F_{i})=\{v \in V:$ $v$ is reachable from $r_{i}$ in $D \}$.

\begin{theorem}(\cite{kamiyama}) \label{19}
	In a digraph $D = (V,A)$, let $R= \{r_{1}, \ldots, r_{k}\} \subseteq V$ be a multiset. There exists a packing of reachable arborescences in digraph $D$ for $R$ 
	if and only if for any $\emptyset \neq X \subseteq V$,
	\[
	d_{A}^{-}(X) \geq |\{ r_{i}: r_{i} \in P(X) \setminus X  \} |.
	\]
\end{theorem}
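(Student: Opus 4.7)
Necessity is immediate. Given arc-disjoint $r_i$-arborescences $F_i$ with $V(F_i)=P(r_i)$, fix any nonempty $X\subseteq V$ and any $r_i\in P(X)\setminus X$. By definition of $P(X)$, some vertex $v\in X$ is reachable from $r_i$, so $v\in V(F_i)$, and the unique directed path from $r_i$ to $v$ in $F_i$ must use at least one arc entering $X$. Since the $F_i$ are pairwise arc-disjoint, we obtain $d_A^-(X)\ge|\{r_i:r_i\in P(X)\setminus X\}|$.

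For sufficiency the plan is induction on $|A|$. One tries to identify an arc $a=(u,v)\in A$ together with an index $i\in[k]$ satisfying (i) $v\ne r_i$ and $u,v\in P(r_i)$, and (ii) in the smaller instance $(D',R')$ obtained by deleting $a$ from $D$ and identifying $v$ with $r_i$ in the $i$-th coordinate, the cut condition still holds. Applying the inductive hypothesis to $(D',R')$ produces arc-disjoint reachable arborescences, and appending $a$ to the $i$-th one recovers the desired packing in $D$.

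The core task is therefore to prove that such an arc $a$ and index $i$ always exist, unless $A=\emptyset$ and every $P(r_i)=\{r_i\}$, in which case there is nothing to prove. A natural route is an uncrossing argument: call $X\subseteq V$ \emph{tight} if $d_A^-(X)=|\{r_i:r_i\in P(X)\setminus X\}|$, show that tight sets are closed under suitable intersections and unions, then select an extremal tight set to locate an arc whose removal preserves the cut condition. Alternatively, one can encode the problem as a matroid intersection: take a direct sum over $i$ of Edmonds' arborescence matroid on $D[P(r_i)]$ against a partition matroid on $A\times[k]$ enforcing arc-disjointness, and verify that the cut hypothesis is exactly the rank identity required by the matroid intersection theorem.

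The main obstacle, and the reason this result is deeper than Edmonds' theorem, is that the right-hand side $|\{r_i:r_i\in P(X)\setminus X\}|$ is not modular in $X$: when $X$ shrinks, a root $r_i$ may drop out of $P(X)\setminus X$ because it no longer reaches $X$. Thus the standard modular uncrossing familiar from the Edmonds proof does not transfer directly; one must exploit the monotonicity of $P$ and carefully track which roots ``see'' which parts of $V$. Reconciling the submodularity of $d_A^-$ with the more delicate, reachability-dependent right-hand side is the technical heart of the argument.
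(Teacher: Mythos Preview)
The paper does not prove this theorem at all: Theorem~\ref{19} is stated in the introduction as a known result of Kamiyama, Katoh and Takizawa~\cite{kamiyama}, cited for context and motivation, and no proof is given. There is therefore nothing in the paper to compare your proposal against.

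As for the proposal itself, your necessity argument is fine, but the sufficiency part is not a proof---it is an outline of two possible strategies (inductive arc removal with uncrossing of tight sets, or a matroid-intersection encoding) together with an honest acknowledgement that the key difficulty, namely that $|\{r_i:r_i\in P(X)\setminus X\}|$ is not modular in $X$, is left unresolved. You correctly identify what makes the theorem harder than Edmonds', but you do not actually carry out the uncrossing step or verify the matroid-rank identity. If you want a complete argument you will need to supply precisely the part you flag as ``the technical heart'': either show that tight sets form a lattice compatible with the reachability function $P$, or follow the original paper~\cite{kamiyama}, which handles the non-modularity by a careful induction exploiting the structure of the sets $P(r_i)$.
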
 

For packing of reachable arborescences, further extensions have been made, such as its mixed version \cite{matsuoka}, matroidal version \cite{kiraly}, and matroidal mixed version \cite{GY-max-mix}.  
Some other developments for packing of arborescences in the recent years include matroid-based packing \cite{durand}, and its hypergraphic version \cite{fortier},  under cardinality constraints \cite{GY-1}.   Refer \cite{fortier} for more details.

In this paper, we are interested in the following extension of Edmonds' theorem, 
which is due to Cai \cite{C-6} and Frank \cite{F-13} (see also Theorem 10.1.11 in Frank \cite{F-2011}).  
This extension characterized a digraph $D$ which contains $k$ arc disjoint spanning arborescences $F_{1}, \ldots, F_{k}$, such that for each $v \in V(D)$, the cardinality of $\{i \in [k]: v \text{ is the root of } F_{i}\}$ lies in some prescribed interval.

\begin{thm} [\cite{C-6,F-13}] \label{Cai-21}
Let $D=(V,A)$ be a digraph, $f, g: V \rightarrow \mathbb{N}$ be functions such that $f \leq g$. Then there exist $k$ arc-disjoint spanning arborescences $F_{1}, \ldots, F_{k}$ in $D$ for which $F_{i}$ is rooted at some $r_{i} \in V$  for $1 \leq i \leq k$ such that $f(v) \leq  |\{i \in [k]: r_{i}=v \}| \leq g(v)$ for $v \in V$, if
and only if,
\begin{itemize}
\item[(i)] $\widetilde{f}(V) \le k$;
\item[(ii)] for any subpartition $\{ X_{1}, \ldots, X_{t} \}$ of $V$,
\begin{equation} \label{C-F-1}
\sum^{t}_{j=1}d_{A}^{-} (X_{j}) \geq k(t-1) + \widetilde{f}(V \setminus \cup_{j=1}^{t}X_{j});
\end{equation}
\item[(iii)] for any $ \emptyset \neq X \subseteq V$,
\begin{equation}\label{dir-ine-g}
d_{A}^{-} (X)  \ge k -  \widetilde{g}(X).
\end{equation}
\end{itemize}
\end{thm}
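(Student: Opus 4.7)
The plan is to reduce the theorem to Edmonds' theorem (Theorem~\ref{spanningarb}) via a \emph{root-multiplicity function} $h: V \to \mathbb{N}$: given a packing $F_{1}, \ldots, F_{k}$, set $h(v) = |\{i \in [k] : r_{i} = v\}|$; conversely, given a suitable $h$, apply Theorem~\ref{spanningarb} to the multiset $R$ in which each $v$ appears $h(v)$ times.

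For necessity, with $h$ as above (so $f \leq h \leq g$ and $\widetilde{h}(V) = k$), condition (i) is immediate from $\widetilde{f}(V) \leq k$. For (iii), each $F_{i}$ whose root lies outside a nonempty $X$ contributes at least one arc to $d_{A}^{-}(X)$, giving $d_{A}^{-}(X) \geq k - \widetilde{h}(X) \geq k - \widetilde{g}(X)$. For (ii), summing the analogous inequality over a subpartition $\{X_{1}, \ldots, X_{t}\}$ and using $\widetilde{h}(\cup_{j} X_{j}) = k - \widetilde{h}(V \setminus \cup_{j} X_{j})$ yields $\sum_{j} d_{A}^{-}(X_{j}) \geq k(t-1) + \widetilde{h}(V \setminus \cup_{j} X_{j}) \geq k(t-1) + \widetilde{f}(V \setminus \cup_{j} X_{j})$.

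For sufficiency, the crux is to produce an integer $h$ satisfying $f \leq h \leq g$, $\widetilde{h}(V) = k$, and $\widetilde{h}(X) \geq k - d_{A}^{-}(X)$ for every nonempty $X \subseteq V$; Theorem~\ref{spanningarb} applied with the resulting multiset then supplies the packing directly. I would establish the existence of such $h$ by analyzing the polyhedron
\[
P = \left\{h \in \mathbb{R}^{V} : f \leq h \leq g,\ \widetilde{h}(V) = k,\ \widetilde{h}(X) \geq k - d_{A}^{-}(X)\ \forall\, \emptyset \neq X \subseteq V\right\},
\]
which is an integral generalized polymatroid (sliced by a box and a sum-hyperplane) because $d_{A}^{-}$ is submodular. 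The main obstacle is proving $P \neq \emptyset$: I plan to argue via Farkas' lemma combined with an uncrossing step on the supermodular lower-bound inequalities, reducing any hypothetical infeasibility witness to a canonical form in which the support of the dual is either empty, a single set $X$, or a subpartition $\{X_{1}, \ldots, X_{t}\}$. Each of these canonical cases translates into a violation of (i) (using also the consequence $\widetilde{g}(V) \geq k$ of (iii) at $X=V$), (iii), or (ii), respectively, showing that (i)--(iii) are exactly the conditions guaranteeing $P \neq \emptyset$. Integrality of $P$ then furnishes an integer $h$, and a single application of Theorem~\ref{spanningarb} completes the proof.
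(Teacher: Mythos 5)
You should first note a mismatch with the framing: the paper does not prove Theorem~\ref{Cai-21} at all --- it quotes it from Cai and Frank and uses it only as the base case ($E=\emptyset$) of the induction proving Theorem~\ref{mix-spanning} --- so there is no in-paper proof to compare against. Judged on its own, your strategy is the classical one: encode the roots by a multiplicity vector $h$ with $f\le h\le g$, $\widetilde{h}(V)=k$ and $\widetilde{h}(X)\ge k-d_{A}^{-}(X)$ for all nonempty $X$, and invoke Theorem~\ref{spanningarb}. Your necessity argument is complete and correct, and the reduction of sufficiency to the existence of such an $h$ is also correct.

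The gap is that the heart of sufficiency --- the feasibility of $P$ --- is only sketched, and the sketch's final translation step is wrong as stated. Since $k-d_{A}^{-}$ is merely intersecting supermodular, uncrossing an infeasibility certificate does reduce the set-support to a laminar and then disjoint family, but a certificate supported on a disjoint family $\{X_{1},\ldots,X_{t}\}$ need not ``translate into a violation of (ii)'': depending on the sign of the multiplier on the equality $\widetilde{h}(V)=k$ and on which box duals enter, you also obtain certificates asserting the failure of the mixed inequality $\widetilde{g}(X)\ \ge\ \sum_{j}\bigl(k-d_{A}^{-}(X_{j})\bigr)+\widetilde{f}\bigl(X\setminus\cup_{j}X_{j}\bigr)$ for a subpartition of some $X\subsetneq V$. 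For $t=1$, $X_{1}=X$ this is (iii), but for $t\ge 2$ it is literally none of (i)--(iii), so your trichotomy does not close the case analysis. The theorem survives because this mixed condition follows from (iii) applied to each $X_{j}$ together with $f\le g$ on $X\setminus\cup_{j}X_{j}$ (namely $\sum_{j}(k-d_{A}^{-}(X_{j}))\le\widetilde{g}(\cup_{j}X_{j})$ and $\widetilde{f}(X\setminus\cup_{j}X_{j})\le\widetilde{g}(X\setminus\cup_{j}X_{j})$), but that derivation --- together with the truncation argument showing that the subpartition closure of $k-d_{A}^{-}$ padded by $f$ is fully supermodular, which is what makes $P$ an integral base polyhedron and justifies picking an integer $h$ --- is exactly the substantive content you have deferred. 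So the approach is the right one (and the uncrossing you would need is close in spirit to the PIEO uncrossing the paper performs in Claim~\ref{>=|E|} and Lemma~\ref{emptyset} for the mixed setting), but as written this is an outline of a proof, not a proof.
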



In this paper, we generalize  Theorem \ref{packingtrees}  
and  Theorem \ref{Cai-21} to mixed graphs, which is the following.

\begin{thm} \label{mix-spanning}
Let $F=(V; E, A)$ be a mixed graph, $f, g: V \rightarrow \mathbb{N}$ be functions such that $f \leq g$. Then there exist $k$ edge and arc disjoint spanning mixed arborescences $F_{1}, \ldots, F_{k}$ in $F$ for which $F_{i}$ is rooted at some $r_{i} \in V$  for $1 \leq i \leq k$ and $f(v) \leq  |\{i \in [k]: r_{i}=v \}| \leq g(v)$ for $v \in V$, if and only if,
\begin{itemize}
\item[(i)] $\widetilde{f}(V) \le k$;
\item[(ii)] for any subpartition $\mathcal{P}=\{X_{1}, \ldots, X_{t} \}$ of $V$,
\begin{equation} \label{ine-f}
e_{E}(\mathcal{P})+\sum^{t}_{j=1}d_{A}^{-} (X_{j}) \geq k(t-1) + \widetilde{f}(V \setminus \cup_{j=1}^{t}X_{j});
\end{equation}
\item[(iii)] for any subpartition $\mathcal{P}=\{X_{1}, \ldots, X_{t} \}$ of $V$,
\begin{equation}\label{ine-g}
e_{E}(\mathcal{P})+\sum^{t}_{j=1}d_{A}^{-} (X_{j}) \geq kt - \widetilde{g}(\cup_{j=1}^{t}X_{j}).
\end{equation}
\end{itemize} 
\end{thm}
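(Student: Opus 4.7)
For necessity, given a packing $F_1,\ldots,F_k$ with roots $r_i$, orient each $F_i$ as an $r_i$-arborescence. For any subpartition $\mathcal{P}=\{X_1,\ldots,X_t\}$ with $V_0 = V\setminus\bigcup_j X_j$, the path from $r_i$ in oriented $F_i$ to any vertex of $X_j$ must cross into $X_j$ whenever $r_i\notin X_j$, so $\sum_j d^-_{F_i}(X_j)\ge t-[r_i\in\bigcup_jX_j]$. As the $F_i$ are edge/arc-disjoint, summing yields
\[
e_E(\mathcal{P})+\sum_{j=1}^t d^-_A(X_j)\ \ge\ kt-|\{i:r_i\in\textstyle\bigcup_j X_j\}|,
\]
and applying $f(v)\le|\{i:r_i=v\}|\le g(v)$ together with $\sum_v|\{i:r_i=v\}|=k$ gives (i), (ii), and (iii).

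For sufficiency, my plan is to induct on $\Delta:=\widetilde{g}(V)-\widetilde{f}(V)$, which is nonnegative since (i) gives $\widetilde{f}(V)\le k$ while (iii) applied to $\mathcal{P}=\{V\}$ gives $\widetilde{g}(V)\ge k$. In the base case $\Delta=0$, we have $f=g$ and $\widetilde{f}(V)=\widetilde{g}(V)=k$, so the multiset $R$ of roots with multiplicity $f$ is uniquely determined with $|R|=k$, and (ii) and (iii) collapse into the single condition
\[
e_E(\mathcal{P})+\sum_j d_A^-(X_j)\ \ge\ kt-|R\cap\textstyle\bigcup_j X_j|
\]
for every subpartition $\mathcal{P}$. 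I would handle this multi-root mixed case by augmenting $F$ with an auxiliary vertex $s$ and one arc $s\to r_i$ for each $i$, then invoking Frank's single-root mixed theorem \cite{F-13} on the augmented graph: since there are exactly $k$ arcs out of $s$, each of the $k$ spanning $s$-mixed arborescences must use exactly one such arc, and so restricts to a spanning $r_i$-mixed arborescence of $F$.

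For the inductive step $\Delta>0$, either $\widetilde{f}(V)<k$ or $\widetilde{g}(V)>k$. In the former case I try to increase $f(v)$ by $1$ at some $v$ with $f(v)<g(v)$; this preserves (i) and (iii) automatically, and preserves (ii) provided no (ii)-tight subpartition has $v\in V_0$. In the latter case I try to decrease $g(v)$ by $1$ at some such $v$; this preserves (i) and (ii) automatically, and preserves (iii) provided no (iii)-tight subpartition has $v\in\bigcup_j X_j$. Either move reduces $\Delta$ by one, and the induction closes once the existence of such a $v$ is established.

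The main obstacle is proving that a suitable $v$ always exists. The plan is an uncrossing argument: supposing the contrary, pick for each candidate $v$ an obstructing tight subpartition, and combine pairs of them via a common refinement $\mathcal{P}\vee\mathcal{Q}$. The key technical lemma I expect to need is that the submodularity of $d_A^-$, combined with an analogous subpartition-level inequality for $e_E$, makes $\mathcal{P}\vee\mathcal{Q}$ again tight for (ii), with $V_0(\mathcal{P}\vee\mathcal{Q})=V_0(\mathcal{P})\cup V_0(\mathcal{Q})$, and dually on the $\bigcup_jX_j$ side for (iii)-tight subpartitions. Iterating this produces a single tight subpartition simultaneously obstructing every $v$ with $f(v)<g(v)$, and unpacking tightness against the definition of that set forces $\widetilde{f}(V)=k$ (respectively $\widetilde{g}(V)=k$), contradicting $\Delta>0$. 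This subpartition-level uncrossing — made necessary because $e_E$ is defined on subpartitions rather than single sets — is the place where the argument genuinely departs from the purely directed proof of Theorem~\ref{Cai-21} of Cai and Frank, and I expect it to be the technically delicate heart of the proof.
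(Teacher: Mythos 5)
Your necessity argument, the reduction of the base case $f=g$, $\widetilde f(V)=\widetilde g(V)=k$ to Frank's single-root mixed theorem via an extra root vertex $s$ with $k$ parallel root-arcs, and the endgame contradictions in the inductive step are all sound, and your overall route (induction on $\widetilde g(V)-\widetilde f(V)$, equalizing $f$ and $g$ one unit at a time) is genuinely different from the paper, which instead inducts on $|E|$, orienting one undirected edge at a time so that (\ref{ine-f}) and (\ref{ine-g}) survive, and finishes with the directed Theorem~\ref{Cai-21}. However, the step you yourself flag as the ``technically delicate heart'' is left unproven, and as stated it is not quite right. Your merging lemma uses the common refinement $\mathcal{P}\vee\mathcal{Q}$, but the sandwich argument that forces tightness needs the right-hand-side bookkeeping $k(|\mathcal{F}_3|-1)+k(|\mathcal{F}_4|-1)=k(|\mathcal{P}|-1)+k(|\mathcal{Q}|-1)$, i.e.\ the two merged families must have exactly $|\mathcal{P}|+|\mathcal{Q}|$ parts in total; the common refinement can have strictly more parts (already for two crossing parts of $\mathcal{P}$ against two crossing parts of $\mathcal{Q}$), so the inequality you would need for it does not follow from submodularity and may fail. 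The correct objects are the families $\mathcal{F}_3$ (maximal sets) and $\mathcal{F}_4$ (the rest) produced by iterated union/intersection uncrossing of $\mathcal{P}\uplus\mathcal{Q}$ -- exactly the paper's PIEO machinery -- for which $\cup\mathcal{F}_3=\cup\mathcal{P}\cup\cup\mathcal{Q}$, $\cup\mathcal{F}_4=\cup\mathcal{P}\cap\cup\mathcal{Q}$ and $|\mathcal{F}_3|+|\mathcal{F}_4|=|\mathcal{P}|+|\mathcal{Q}|$ (Claim~\ref{fact} and Proposition~\ref{17-P}).

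A second, related gap is the ``analogous subpartition-level inequality for $e_E$'': since an edge running between two parts of a subpartition is counted only once in $e_E(\mathcal{P})$, $e_E$ is not simply submodular part-by-part, and the needed inequality
$e_E(\mathcal{P})+\sum_{X\in\mathcal{P}}d_A^-(X)+e_E(\mathcal{Q})+\sum_{X\in\mathcal{Q}}d_A^-(X)\ \ge\ e_E(\mathcal{F}_3)+\sum_{X\in\mathcal{F}_3}d_A^-(X)+e_E(\mathcal{F}_4)+\sum_{X\in\mathcal{F}_4}d_A^-(X)$
is established in the paper (Claim~\ref{>=|E|}) only through an auxiliary orientation $A''$ of $E$ chosen relative to $\cup\mathcal{P}$ and $\cup\mathcal{Q}$, after which submodularity of $d^-_{A\cup A''}$ applies; some such device has to be supplied. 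Once these two points are repaired, your tight families in $\mathcal{E}^{1}$ merge to a tight family with larger $V\setminus\cup\mathcal{P}$, those in $\mathcal{E}^{2}$ merge to one with larger $\cup\mathcal{P}$, and your contradictions with $\widetilde f(V)<k$, resp.\ $\widetilde g(V)>k$, go through; note you only need the two ``same-type'' cases, whereas the paper's Lemma~\ref{emptyset} also needs the mixed case $\mathcal{F}_1\in\mathcal{E}^1$, $\mathcal{F}_2\in\mathcal{E}^2$ (using $f\le g$) because its edge-orientation step must respect both kinds of tight subpartitions simultaneously. So: viable and genuinely different architecture, but the key uncrossing lemma is mis-specified and unproved, and filling it requires essentially the paper's Claim~\ref{>=|E|} machinery.
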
 

For the proof  of our result, besides what have been used by Nash-Williams~\cite{NW61}, Tutte~\cite{T}, Cai \cite{C-6} and Frank \cite{F-13}, we adopt a technique named as {\em properly intersecting elimination operation (PIEO for simplicity)}, which was first introduced by B\'{e}rczi and Frank \cite{berczi-frank1} (to the best of our knowledge), studied and used again by Gao and Yang \cite{GY-1}. 
Indeed, we shall use some similar approaches to \cite{GY-1} in our proofs of Theorem~\ref{mix-spanning}.

\section{Proof of Theorem~\ref{mix-spanning}}

We shall use some definitions  
and propositions that have been presented in \cite{GY-1}.    

Let $\Omega$ be a finite set.  
Two subsets $X, Y \subseteq \Omega$ are {\em intersecting} if $X \cap Y \neq \emptyset$ and {\em properly intersecting} if $X \cap Y$, $X \setminus Y$, and $Y \setminus X \neq \emptyset$. 
A function $p: 2^{\Omega} \rightarrow \mathbb{Z}$ is {\em supermodular (intersecting supermodular)}, where $2^{\Omega}$ denotes the power set of $\Omega$, if the inequality
\[
p(X)+p(Y) \leq p(X \cup Y)+ p(X \cap Y)
\]
holds for all subsets (intersecting subsets, respectively) of $\Omega$. A function $b$ is {\em submodular} if $-b$ is supermodular.

Let $\mathcal{F}$ be a multiset, which consists of some subsets of $\Omega$ (these subsets do not have to be different). Let $\cup \mathcal{F}$ be the union of elements in $\mathcal{F}$ (then $\cup \mathcal{F} \subseteq \Omega$).
Let $x \in \Omega $ and $\mathcal{F}(x) $ denote the number of elements  in $\mathcal{F}$ containing $x$. If there exist no properly intersecting pairs in $\mathcal{F}$, then $\mathcal{F}$ is {\em laminar}. If there exists a properly intersecting pair $X$ and $Y$ in $\mathcal{F}$, then we obtain $\mathcal{F}'$ from $\mathcal{F}$ by replacing $X$ and $Y $ with one of the following three types of subset(s):

\noindent {\em Type $1$},
$X \cup Y$ and $X \cap Y$, denoted as $\mathcal{F} \xrightarrow{1} \mathcal{F}'$;

\noindent {\em Type $2$},  $X \cup Y$, denoted as $\mathcal{F} \xrightarrow{2} \mathcal{F}'$;

\noindent {\em Type $3$}, $X \cap Y$, denoted as $\mathcal{F} \xrightarrow{3} \mathcal{F}'$.

\noindent{\em A properly intersecting elimination operation (PIEO for simplicity) on $X $ and $ Y$ in $\mathcal{F}$} is defined to be one of the above three types.

Let $Z_{1}$ and $Z_{2}$ be multisets. Denote by $Z_{1} \uplus Z_{2}$ the {\em multiset union} of $Z_{1}$ and $Z_{2}$, that is, for any $z$, the number of $z$ in $Z_{1} \uplus Z_{2}$ is the total number of $z$ in $Z_{1}$ and $Z_{2}$.


Let {\em $\mathcal{D}(\Omega)$ be the set that consists of all families of disjoint subsets of $\Omega$}.  
From now on, we suppose $\mathcal{F}_{1}, \mathcal{F}_{2} \in \mathcal{D}(\Omega)$. 
We adopt PIEOs in $\mathcal{G}_{0}=\mathcal{F}_{1} \uplus \mathcal{F}_{2}$, step by step, and obtain families   $\mathcal{G}_{0}, \ldots, \mathcal{G}_{i-1},\mathcal{G}_{i},\ldots$ of subsets  of $\Omega$. 
In \cite{GY-1}, it has already been proved that the process of PIEOs will terminate.
Suppose the obtained  families of subsets  of $\Omega$ are
$\mathcal{G}_{0}, \ldots, \mathcal{G}_{n}$. Then  $\mathcal{G}_{n}$ is laminar. 
For $X, Y \in \mathcal{G}_{i}$, if  $X \subseteq Y$, then define $X \le Y$.   
Let $\mathcal{G}_{i}'$ be the family of maximal elements in $\mathcal{G}_{i}$, $\mathcal{F}_{3}:= \mathcal{G}_{n}' $ and $\mathcal{F}_{4}:=\mathcal{G}_{n} \setminus \mathcal{F}_{3} $.

\begin{proposition}{\rm(\cite[Proposition 3.2]{GY-1})}\label{47-P}
	If $X, Y  \in \mathcal{G}_{i} $ are properly intersecting, then $X, Y \in \mathcal{G}_{i}'$.
\end{proposition}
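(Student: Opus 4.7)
The plan is to prove the proposition by induction on the step index $i$. For the base case $i=0$, I use that $\mathcal{G}_0 = \mathcal{F}_1 \uplus \mathcal{F}_2$ and that the members within each $\mathcal{F}_j$ are pairwise disjoint. Any properly intersecting pair $X, Y \in \mathcal{G}_0$ must therefore come from different families, say $X \in \mathcal{F}_1$ and $Y \in \mathcal{F}_2$. For maximality, suppose some $Z \in \mathcal{G}_0$ satisfies $X \subsetneq Z$: if $Z \in \mathcal{F}_1$, disjointness within $\mathcal{F}_1$ forces $X \cap Z = \emptyset$, contradicting $X \subsetneq Z$; if $Z \in \mathcal{F}_2$, then $Z$ and $Y$ are disjoint, so $X \cap Y \subseteq Z \cap Y = \emptyset$ contradicts the PI of $X, Y$. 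The argument that $Y \in \mathcal{G}_0'$ is symmetric.

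For the inductive step, I assume the statement for $\mathcal{G}_{i-1}$. The PIEO at step $i$ acts on some PI pair $X_0, Y_0 \in \mathcal{G}_{i-1}$, which therefore lies in $\mathcal{G}_{i-1}'$ by the induction hypothesis. Let $X, Y \in \mathcal{G}_i$ be properly intersecting. I distinguish cases according to the PIEO type (1, 2, or 3) and according to whether each of $X, Y$ is a surviving member of $\mathcal{G}_{i-1} \setminus \{X_0, Y_0\}$ or one of the newly introduced sets $X_0 \cup Y_0$, $X_0 \cap Y_0$. Two observations shorten the case work: the union $X_0 \cup Y_0$ is automatically maximal in $\mathcal{G}_i$, because any strict superset must already lie in $\mathcal{G}_{i-1}$ and would strictly contain $X_0$, contradicting $X_0 \in \mathcal{G}_{i-1}'$; and any pair of survivors that is PI in $\mathcal{G}_i$ is also PI in $\mathcal{G}_{i-1}$, and hence lies in $\mathcal{G}_{i-1}'$ by the IH, so only its maximality in $\mathcal{G}_i$ has to be reexamined.

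The main obstacle is ruling out two types of new containments introduced by the PIEO. First, in types $1$ and $3$ the newly created intersection $X_0 \cap Y_0$ might a priori form a PI pair with some surviving set $Z$; from $(X_0 \cap Y_0) \cap Z \neq \emptyset$ I would derive that $Z$ meets both $X_0$ and $Y_0$, and applying the IH to each of these relations (disjoint, comparable, or PI, with the PI case forcing $Z \in \mathcal{G}_{i-1}'$) leaves only configurations incompatible with $(X_0 \cap Y_0) \setminus Z$ or $Z \setminus (X_0 \cap Y_0)$ being nonempty. Second, for types $1$ and $2$ a surviving PI pair $X, Y$ could lose maximality through $X \subsetneq X_0 \cup Y_0$; here $X \in \mathcal{G}_{i-1}'$ together with $X \cap X_0, X \cap Y_0 \neq \emptyset$ and $X \not\subseteq X_0, X \not\subseteq Y_0$ forces $X$ to be PI with both $X_0$ and $Y_0$ in $\mathcal{G}_{i-1}$, and analyzing how $X$'s PI partner $Y$ relates to $X_0$ and $Y_0$ via the IH eventually rules out the configuration. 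These case distinctions are the genuinely delicate part of the argument; everything else is routine bookkeeping about how the three PIEO types act on $\mathcal{G}_{i-1}$.
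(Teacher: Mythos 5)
Your base case and the two shortcuts in the inductive step (maximality of $X_{0}\cup Y_{0}$, and the fact that a properly intersecting pair of survivors was already properly intersecting, hence maximal, in $\mathcal{G}_{i-1}$) are fine. The gap is in your last paragraph: the two ``delicate'' configurations cannot be ruled out from your induction hypothesis alone, because the statement of the proposition for $\mathcal{G}_{i-1}$ is strictly weaker than what one PIEO step preserves. Concretely, take $\mathcal{G}_{i-1}=\{X_{0},Y_{0},X,Y\}$ with $X_{0}=\{1,2\}$, $Y_{0}=\{2,3\}$, $X=\{1,3\}$, $Y=\{3,4\}$: no member contains another, so every properly intersecting pair consists of two maximal sets and your induction hypothesis is satisfied; yet after the Type~$1$ PIEO on $X_{0},Y_{0}$ the survivors $X,Y$ are still properly intersecting while $X\subsetneq X_{0}\cup Y_{0}$, so the conclusion fails for $\mathcal{G}_{i}$. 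Similarly, $\mathcal{G}_{i-1}=\{\{1,2,3\},\{2,3,4\},\{3,5\}\}$ satisfies the hypothesis, but after the Type~$1$ PIEO the new set $\{2,3\}=X_{0}\cap Y_{0}$ is properly intersecting with the survivor $\{3,5\}$ and is not maximal. Neither family can occur in the actual process, but your induction has no way of knowing that: the information that $\mathcal{G}_{0}=\mathcal{F}_{1}\uplus\mathcal{F}_{2}$ comes from two families of \emph{disjoint} sets has been discarded after the base case. (There is also a smaller leak: $\mathcal{G}_{i}$ is a multiset, so a survivor may coincide with $X_{0}$ as a set, and then maximality of $X$ in $\mathcal{G}_{i-1}$ does not give $X\not\subseteq X_{0}$ as you assert.)

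The repair is to strengthen what you carry through the induction, and the natural invariant is the covering number $\mathcal{G}_{i}(x)\le 2$ for every $x\in\Omega$ --- exactly the quantity $\mathcal{F}(x)$ the paper defines before introducing PIEOs. It holds for $\mathcal{G}_{0}$ because $\mathcal{F}_{1},\mathcal{F}_{2}\in\mathcal{D}(\Omega)$, and it is preserved by every PIEO: Type~$1$ leaves the coverage of each point unchanged, while Types~$2$ and~$3$ only decrease it. With this invariant the proposition follows at once, with no case analysis: if $X,Y\in\mathcal{G}_{i}$ are properly intersecting and $X\subsetneq Z$ for some $Z\in\mathcal{G}_{i}$, then $Z\neq X$ and $Z\neq Y$ as sets (the latter since $X\subsetneq Y$ would contradict $X\setminus Y\neq\emptyset$), so any $x\in X\cap Y$ lies in three distinct members $X,Y,Z$, giving $\mathcal{G}_{i}(x)\ge 3$, a contradiction; note that both bad families above have a point covered three times, which is precisely what the invariant forbids. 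So your overall plan can be salvaged, but only after replacing the bare inductive hypothesis by this (or an equivalent) structural invariant; as written, the ``eventually rules out the configuration'' step cannot be completed.
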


\begin{proposition}{\rm(\cite[Proposition 3.4]{GY-1})}\label{17-P}
	The following hold true:
\begin{enumerate}
	\item[(i)] 	$\mathcal{F}_{3}, \mathcal{F}_{4} \in \mathcal{D}(\Omega)$. $\cup\mathcal{F}_{4} \subseteq (\cup \mathcal{F}_{1}) \cap (\cup \mathcal{F}_{2})$.
	
	\item[(ii)]
	 Moreover, $\cup\mathcal{F}_{4} = (\cup \mathcal{F}_{1}) \cap (\cup \mathcal{F}_{2})$ if and only if for any $i \in [n]$, $\mathcal{G}_{i-1} \xrightarrow{1} \mathcal{G}_{i}$.
\end{enumerate}
\end{proposition}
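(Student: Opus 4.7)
My plan is to track a simple multiplicity invariant through the PIEO process. For $x\in\Omega$, set $c_i(x):=\mathcal{G}_i(x)$. Since $\mathcal{F}_1,\mathcal{F}_2\in\mathcal{D}(\Omega)$, one has $c_0(x)=\mathcal{F}_1(x)+\mathcal{F}_2(x)\le 2$, with equality precisely when $x\in(\cup\mathcal{F}_1)\cap(\cup\mathcal{F}_2)$. A direct case analysis of the three PIEO types shows that type~$1$ preserves $c$ pointwise, while each application of type~$2$ or type~$3$ strictly decreases $c_i(x)$ at every $x\in X\cap Y$ (this intersection being nonempty because $X,Y$ properly intersect). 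Hence $c_i(x)$ is nonincreasing in $i$ and bounded above by $2$.

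For part~(i), disjointness of $\mathcal{F}_3$ is automatic from laminarity of $\mathcal{G}_n$. For $\mathcal{F}_4$, I argue by contradiction: if two distinct nonmaximal sets $W_1,W_2\in\mathcal{F}_4$ share some $x$, laminarity lets me assume $W_1\subseteq W_2$; nonmaximality of $W_2$ then produces some $U\in\mathcal{G}_n$ with $W_2\subsetneq U$, giving three distinct sets of $\mathcal{G}_n$ containing $x$ and so $c_n(x)\ge 3$, violating the invariant. The same bound yields the containment $\cup\mathcal{F}_4\subseteq(\cup\mathcal{F}_1)\cap(\cup\mathcal{F}_2)$: every $x\in\cup\mathcal{F}_4$ lies in some nonmaximal $W$ together with a strictly larger $U\in\mathcal{G}_n$, so $c_n(x)\ge 2$, which forces $c_0(x)=2$ and hence $x\in(\cup\mathcal{F}_1)\cap(\cup\mathcal{F}_2)$.

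Part~(ii) drops out of the same invariant. For the ``if'' direction, when only type~$1$ operations are used one has $c_n\equiv c_0$, and the argument of part~(i) upgrades to an equivalence $x\in\cup\mathcal{F}_4\iff c_n(x)=2\iff c_0(x)=2\iff x\in(\cup\mathcal{F}_1)\cap(\cup\mathcal{F}_2)$. For the ``only if'' direction, suppose some step $i$ applies type~$2$ or~$3$ on a properly intersecting pair $X,Y$ and pick $x\in X\cap Y$; the upper bound $c\le 2$ forces $c_{i-1}(x)=2$, the operation drops $c_i(x)$ to $1$, and monotonicity then gives $c_n(x)\le 1$, so $x\notin\cup\mathcal{F}_4$ while $c_0(x)=2$ still places $x\in(\cup\mathcal{F}_1)\cap(\cup\mathcal{F}_2)$, contradicting the claimed equality.

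The step I expect to be the real obstacle is the disjointness of $\mathcal{F}_4$: nonmaximal elements of a generic laminar family are \emph{not} pairwise disjoint, so the conclusion genuinely relies on the sharp bound $c_n(x)\le 2$ coming from $\mathcal{F}_1,\mathcal{F}_2\in\mathcal{D}(\Omega)$, rather than on laminarity alone. Once this invariant is in hand the rest of the proof is essentially bookkeeping, and part~(ii) simply reflects the fact that type~$1$ is the unique ``multiplicity-preserving'' PIEO.
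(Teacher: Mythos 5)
Your argument is correct: the multiplicity invariant $c_i(x)=\mathcal{G}_i(x)\le 2$, preserved by Type~$1$ and strictly decreased on $X\cap Y$ by Types~$2$ and~$3$, together with laminarity of $\mathcal{G}_n$, does give (i) and both directions of (ii) (the key equivalence being $x\in\cup\mathcal{F}_4 \iff c_n(x)=2$, which uses the disjointness of $\mathcal{F}_3$). Note that the paper itself offers no proof here --- it imports the proposition verbatim from \cite[Proposition 3.4]{GY-1} --- so there is nothing in-paper to compare against; your counting argument is the natural self-contained justification, with only a cosmetic caveat that when the same set occurs in both $\mathcal{F}_1$ and $\mathcal{F}_2$ the ``strictly larger $U$'' in your $\mathcal{F}_4$-disjointness step should be replaced by ``another member of $\mathcal{G}_n$ containing $x$'', after which the bound $c_n(x)\le 2$ yields the same contradiction.
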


Now we are ready for the \textbf{proof of Theorem~\ref{mix-spanning}}.

\smallskip

\noindent\textbf{($\Rightarrow$) Necessity:} 
Suppose there exist $k$ edge and arc disjoint spanning mixed arborescences $F_{1}, \ldots, F_{k}$ in $F$ for which $F_{i}$ is rooted at some $r_{i} \in V$  for $1 \leq i \leq k$ and $f(v) \leq  |\{i \in [k]: r_{i}=v \}| \leq g(v)$ for $v \in V$,

Then there exists an oriented arc set $A'$ of $E$, for which there exist $k$ arc-disjoint spanning arborescences $F'_{1}, \ldots, F'_{k}$ in $D=(V, A\cup A' )$ such that $F'_{i}$ is rooted at $r_{i}$  for $1 \leq i \leq k$ and $f(v) \leq  |\{i \in [k]: r_{i}=v \}| \leq g(v)$ for $v \in V$. Obviously, $\widetilde{f}(V) \leq k$. 
By Theorem~\ref{Cai-21}, (\ref{C-F-1}) and (\ref{dir-ine-g}) hold in $D$.  Let $\mathcal{P}=\{X_{0}, X_{1}, \ldots, X_{t} \}$ be a partition of $V$. Since $A'$ is an oriented arc set of $E$, $e_{E}(\mathcal{P}) \geq \sum_{j=1}^{t}d_{A'}^{-}(X_{j})$. Hence, by (\ref{C-F-1}),
\[
e_{E}(\mathcal{P}) +\sum_{j=1}^{t}d_{A}^{-}(X_{j}) \geq \sum_{j=1}^{t}d_{A'}^{-}(X_{j})+ \sum_{j=1}^{t}d_{A}^{-}(X_{j})=\sum_{j=1}^{t}d_{A \cup A'}^{-}(X_{j}) \geq k(t-1)+ \widetilde{f}(X_{0}),
\]
this is (\ref{ine-f}).  By (\ref{dir-ine-g}),
\[
e_{E}(\mathcal{P}) +\sum_{j=1}^{t}d_{A}^{-}(X_{j}) \geq \sum_{j=1}^{t}d_{A \cup A'}^{-}(X_{j}) \geq \sum_{j=1}^{t}(k - \widetilde{g}(X_{j}))=kt- \widetilde{g}(\cup_{j=1}^{t}X_{j}),
\]
this is (\ref{ine-g}).

\smallskip 

\noindent\textbf{($\Leftarrow$) Sufficiency:}  
We prove the sufficiency by induction on $|E|$. 
For the base step, suppose $E=\emptyset$; and  then apply  Theorem~\ref{Cai-21}. 

For the induction step, suppose $E \neq \emptyset$.  
We shall prove that we can orient an edge $e \in E$ to $\vec{e}$, 
such that after we do  $A:= A+\vec{e}$, $E:=E-e$, $F':=(V; A, E)$,  assumptions (\ref{ine-f}) and (\ref{ine-g}) still hold for the new mixed graph $F'$. 
Then by the induction hypothesis, there exist $k$ edge and arc disjoint mixed arborescences $F_{1}, \ldots, F_{k}$ in $F'$ for which $F_{i}$ is rooted at some $r_{i} \in V$  for $1 \leq i \leq k$ such that $f(v) \leq  |\{i \in [k]: r_{i}=v \}| \leq g(v)$ for $v \in V$. If $\vec{e} \notin \cup_{i=1}^{k}F_{i}$, then $F$ includes $F_{1}, \ldots, F_{k}$ as demanded; if $\vec{e} \in E(F_{i_{0}})$ for some $i_{0} \in [k]$, then $F$ includes $F_{1}, \ldots, F_{i_{0}}-\vec{e}+e, \ldots, F_{k}$ as demanded. 
 
The critical point that determines an orientation of $e$ lies on the subpartitions of $V$ that make  assumptions (\ref{ine-f}) or  (\ref{ine-g}) tight in $F$. These critical subpartitions are defined next;   $\mathcal{E}^{1}$  is aimed at  (\ref{ine-f}),  $\mathcal{E}^{2}$ is aimed  at  (\ref{ine-g}). This explains why the subpartitions in $\mathcal{E}^{1}$ and $\mathcal{E}^{2}$ will play some central roles next.    
 
Define
\[
\begin{split}
\mathcal{E}^{1} &:=\{ \mathcal{F} \in \mathcal{D}(V) : e_{E}(\mathcal{F})+ \sum_{X \in \mathcal{F}}d_{A}^{-}(X)=k(t-1)+\widetilde{f}(V \setminus \cup \mathcal{F}) \};\\
\mathcal{E}^{2} &:=\{ \mathcal{F} \in \mathcal{D}(V): e_{E}(\mathcal{F})+ \sum_{X \in \mathcal{F}}d_{A}^{-}(X)=kt-\widetilde{g}( \cup \mathcal{F}) \}.
\end{split} 
\] 
Suppose $\mathcal{F}_{1}, \mathcal{F}_{2} \in \mathcal{E}^{1} \cup \mathcal{E}^{2}$, denote 
\[
E(\mathcal{F}_{1}, \mathcal{F}_{2}):=\{e \in E: \text{one end of $e$ is in $\cup \mathcal{F}_{1} \setminus \cup \mathcal{F}_{2}$  and the other is in $\cup \mathcal{F}_{2} \setminus \cup \mathcal{F}_{1}$}  \}.
\]
\noindent \textbf{Process of PIEOs.}  Let  $\mathcal{G}_{0}=\mathcal{F}_{1} \uplus \mathcal{F}_{2}$. We adopt PIEOs of Type $1$ in $\mathcal{G}_{0}=\mathcal{F}_{1} \uplus \mathcal{F}_{2}$, step by step, and obtain families   $\mathcal{G}_{0}, \ldots, \mathcal{G}_{n}$ of subsets  of $V$; equivalently, for $0 \leq i \leq n-1$, we replace some properly intersecting pair $X$ and $Y$ in $\mathcal{G}_{i-1}$ with $X \cup Y$ and $X \cap Y$, and obtain $\mathcal{G}_{i}$. 
Recall that $\mathcal{G}_{i}'$ is the family of maximal elements in  $\mathcal{G}_{i}$, $\mathcal{F}_{3}:= \mathcal{G}_{n}' $ and $\mathcal{F}_{4}:=\mathcal{G}_{n} \setminus \mathcal{F}_{3} $. By Proposition~\ref{17-P}, $\mathcal{F}_{3}, \mathcal{F}_{4} \in \mathcal{D}(V)$. 

\begin{claim} \label{fact}
\begin{enumerate}
\item[(i)] $|\mathcal{F}_{1}|+|\mathcal{F}_{2}|=|\mathcal{F}_{3}|+|\mathcal{F}_{4}|$.
\item[(ii)]  $\cup \mathcal{F}_{3}=(\cup \mathcal{F}_{1}) \cup (\cup \mathcal{F}_{2})$.
\end{enumerate}
\end{claim}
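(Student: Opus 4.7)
The plan is to read off both statements directly from the fact that every PIEO applied in the process is of Type $1$, so the operation $X,Y \mapsto X\cup Y, X\cap Y$ is the only transition between consecutive $\mathcal{G}_i$'s. This makes each step cardinality-preserving and union-preserving, and that is essentially all that is needed.

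For part (i), I would first observe that a single Type $1$ PIEO replaces a pair of sets by another pair of sets, so $|\mathcal{G}_{i-1}|=|\mathcal{G}_i|$ for every $i$, and therefore $|\mathcal{G}_0|=|\mathcal{G}_n|$. Since $\mathcal{G}_0=\mathcal{F}_1\uplus\mathcal{F}_2$ as a multiset, we have $|\mathcal{G}_0|=|\mathcal{F}_1|+|\mathcal{F}_2|$. On the other hand $\mathcal{F}_3$ and $\mathcal{F}_4$ partition $\mathcal{G}_n$ by definition, giving $|\mathcal{G}_n|=|\mathcal{F}_3|+|\mathcal{F}_4|$, from which (i) follows.

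For part (ii), I would next use that one Type $1$ PIEO preserves the union: since $(X\cup Y)\cup(X\cap Y)=X\cup Y$, removing $X,Y$ and adding $X\cup Y, X\cap Y$ leaves the union of all sets in the family unchanged. Iterating, $\cup\mathcal{G}_0=\cup\mathcal{G}_n$, and the left-hand side equals $(\cup\mathcal{F}_1)\cup(\cup\mathcal{F}_2)$ while the right-hand side equals $(\cup\mathcal{F}_3)\cup(\cup\mathcal{F}_4)$. It then remains to show $\cup\mathcal{F}_4\subseteq\cup\mathcal{F}_3$: every element of $\mathcal{F}_4=\mathcal{G}_n\setminus\mathcal{G}_n'$ is, by the definition of $\mathcal{G}_n'$ as the family of maximal members of the laminar family $\mathcal{G}_n$, strictly contained in some maximal element, hence in some member of $\mathcal{F}_3$. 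Thus $(\cup\mathcal{F}_3)\cup(\cup\mathcal{F}_4)=\cup\mathcal{F}_3$, proving (ii).

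I do not foresee a real obstacle; the only mild subtlety is being careful with the multiset interpretation of $\mathcal{G}_0=\mathcal{F}_1\uplus\mathcal{F}_2$ (so that $|\mathcal{G}_0|=|\mathcal{F}_1|+|\mathcal{F}_2|$ even if $\mathcal{F}_1$ and $\mathcal{F}_2$ share sets) and noting that Proposition~\ref{17-P}(ii) gives $\cup\mathcal{F}_4=(\cup\mathcal{F}_1)\cap(\cup\mathcal{F}_2)$ as a sanity check consistent with $\cup\mathcal{F}_3=(\cup\mathcal{F}_1)\cup(\cup\mathcal{F}_2)$, but is not actually required for the argument above.
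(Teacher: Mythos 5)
Your proof is correct. Part (i) is exactly the paper's argument: each Type $1$ PIEO preserves the cardinality of the family, so $|\mathcal{F}_1|+|\mathcal{F}_2|=|\mathcal{G}_0|=|\mathcal{G}_n|=|\mathcal{F}_3|+|\mathcal{F}_4|$. For part (ii) you take a genuinely (if mildly) different route. The paper tracks the union of the \emph{maximal} elements: it invokes Proposition~\ref{47-P} to see that a properly intersecting pair $X,Y$ lies in $\mathcal{G}_{i-1}'$, so $X\cup Y\in\mathcal{G}_i'$ and $\cup\mathcal{G}_{i-1}'=\cup\mathcal{G}_i'$, whence $(\cup\mathcal{F}_1)\cup(\cup\mathcal{F}_2)=\cup\mathcal{G}_0'=\cup\mathcal{G}_n'=\cup\mathcal{F}_3$. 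You instead track the union of \emph{all} members, which is trivially invariant since $(X\cup Y)\cup(X\cap Y)=X\cup Y$, and then absorb $\cup\mathcal{F}_4$ into $\cup\mathcal{F}_3$ because every member of $\mathcal{G}_n$ lies inside some maximal member. Your version is a bit more elementary: it dispenses with Proposition~\ref{47-P} entirely, and in fact neither laminarity of $\mathcal{G}_n$ nor strict containment is needed for the absorption step (any member of a finite family is contained in a maximal one, and in the multiset setting a copy of a maximal set assigned to $\mathcal{F}_4$ is still contained, though not strictly, in a member of $\mathcal{F}_3$); what the paper's route buys is the stepwise invariant $\cup\mathcal{G}_{i-1}'=\cup\mathcal{G}_i'$, which is not needed elsewhere, so the two arguments are equally serviceable here.
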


\begin{proof}  
For $i \in [n]$, suppose we replace a properly intersecting pair $X$ and $Y$ in $\mathcal{G}_{i-1}$ with $ X \cup Y$ and $X \cap Y$, and obtain $\mathcal{G}_{i}$. Clearly, $|\mathcal{G}_{i-1}|=|\mathcal{G}_{i}|$.  
It follows that $|\mathcal{F}_{1}|+|\mathcal{F}_{2}|=  |\mathcal{G}_{0}|=|\mathcal{G}_{n}|=|\mathcal{F}_{3}|+|\mathcal{F}_{4}|$.

By Proposition~\ref{47-P}, $X, Y \in \mathcal{G}_{i-1}'$, and thus $X \cup Y \in \mathcal{G}_{i}'$. So $\mathcal{G}_{i}'$ consists of $X \cup Y$ and all the subsets in $\mathcal{G}_{i-1}'$ not contained in $X \cup Y$; this proves $ \cup \mathcal{G}'_{i-1} = \cup \mathcal{G}'_{i}$. 
Hence $\cup \mathcal{G}'_{0}= \cup \mathcal{G}'_n$.   

By definition, $(\cup \mathcal{F}_{1}) \cup (\cup \mathcal{F}_{2})= \cup \mathcal{G}'_{0}$; since $\cup \mathcal{G}'_{0}=\cup \mathcal{G}'_{n} $ and $\mathcal{G}'_{n}=\mathcal{F}_{3}$, we have $ (\cup \mathcal{F}_{1}) \cup (\cup \mathcal{F}_{2})= \cup \mathcal{G}'_{0}= \cup \mathcal{G}'_{n}=  \cup \mathcal{F}_{3}$. 
\end{proof} 

\begin{claim} \label{>=|E|}
For $\mathcal{F}_{1}, \mathcal{F}_{2} \in \mathcal{E}^{1} \cup \mathcal{E}^{2}$, we have
\begin{align*}
 & e_{E}(\mathcal{F}_{1})+ \sum_{X \in \mathcal{F}_{1}}d_{A}^{-}(X) + e_{E}(\mathcal{F}_{2})+ \sum_{X \in \mathcal{F}_{2}}d_{A}^{-}(X) \\
 \geq ~ & e_{E}(\mathcal{F}_{3})+ \sum_{X \in \mathcal{F}_{3}}d_{A}^{-}(X)+ e_{E}(\mathcal{F}_{4})+ \sum_{X \in \mathcal{F}_{4}}d_{A}^{-}(X) + |E(\mathcal{F}_{1}, \mathcal{F}_{2})|.
\end{align*}
\end{claim}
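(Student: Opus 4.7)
The plan is to prove the inequality by handling the arc contribution and the edge contribution separately and then summing.

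For the arcs, I will iterate the submodular inequality
\[
d_A^-(X) + d_A^-(Y) \ \ge \ d_A^-(X \cup Y) + d_A^-(X \cap Y),
\]
valid for properly intersecting $X, Y$, along the PIEO chain $\mathcal{G}_0 = \mathcal{F}_1 \uplus \mathcal{F}_2 \to \cdots \to \mathcal{G}_n = \mathcal{F}_3 \uplus \mathcal{F}_4$. Since every PIEO in the present process is of Type~$1$ and each step leaves all non-involved members of the family untouched, the total of $d_A^-$ over the family only decreases at each step. Telescoping yields
\[
\sum_{X \in \mathcal{F}_1} d_A^-(X) + \sum_{X \in \mathcal{F}_2} d_A^-(X) \ \ge \ \sum_{X \in \mathcal{F}_3} d_A^-(X) + \sum_{X \in \mathcal{F}_4} d_A^-(X).
\]

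For the edges, I will partition $V$ into $U_0 = V \setminus ((\cup \mathcal{F}_1) \cup (\cup \mathcal{F}_2))$, $U_1 = (\cup \mathcal{F}_1) \setminus (\cup \mathcal{F}_2)$, $U_2 = (\cup \mathcal{F}_2) \setminus (\cup \mathcal{F}_1)$, and $U_{12} = (\cup \mathcal{F}_1) \cap (\cup \mathcal{F}_2)$. By Claim~\ref{fact}(ii), $\cup \mathcal{F}_3 = U_1 \cup U_2 \cup U_{12}$; by Proposition~\ref{17-P}(ii) together with the fact that only Type~$1$ operations are used, $\cup \mathcal{F}_4 = U_{12}$. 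Denoting by $\beta(\mathcal{F}, e) \in \{0,1\}$ the indicator that an edge $e = uv$ is counted by $e_E(\mathcal{F})$, I will analyze
\[
\delta(e) := \beta(\mathcal{F}_1, e) + \beta(\mathcal{F}_2, e) - \beta(\mathcal{F}_3, e) - \beta(\mathcal{F}_4, e)
\]
by a case split on which of the four regions $u$ and $v$ lie in. In the critical case $u \in U_1, v \in U_2$, which is precisely $E(\mathcal{F}_1, \mathcal{F}_2)$, one checks that $\beta(\mathcal{F}_1) = \beta(\mathcal{F}_2) = 1$ (one end in, one end out) and $\beta(\mathcal{F}_4) = 0$ (since $U_1 \cap U_{12} = \emptyset$), so $\delta(e) \ge 2 - \beta(\mathcal{F}_3) \ge 1$; these edges supply the required surplus $|E(\mathcal{F}_1, \mathcal{F}_2)|$.

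The remaining cases I handle via laminar containment, namely that any $X \in \mathcal{F}_i$ ($i \in \{1,2\}$) containing a vertex $w$ is contained in the unique $\mathcal{F}_3$-part containing $w$, so a pair of vertices sharing a part in $\mathcal{F}_1$ or $\mathcal{F}_2$ shares a part in $\mathcal{F}_3$. The main obstacle is the subcase $u, v \in U_{12}$ with $u, v$ in a common $\mathcal{F}_3$-part but in distinct $\mathcal{F}_4$-parts, where $\beta(\mathcal{F}_3) = 0$ and $\beta(\mathcal{F}_4) = 1$, so one must establish $\beta(\mathcal{F}_1) + \beta(\mathcal{F}_2) \ge 1$. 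I plan to argue the contrapositive by induction along the PIEO chain: if $u, v$ both lie in a common $X \in \mathcal{F}_1$ and a common $Y \in \mathcal{F}_2$, then at each step they remain simultaneously in some intersection-type member of $\mathcal{G}_i$, and hence wind up in a common part of $\mathcal{F}_4$, contradicting the assumption that they are separated there. Combining the arc inequality, the edge tally, and this surplus delivers the claim.
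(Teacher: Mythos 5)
Your proposal is correct in substance but takes a genuinely different route from the paper. The paper does not separate edges from arcs: it fixes one auxiliary orientation $A''$ of $E$ (giving priority to edges entering $\cup \mathcal{F}_{1}$, then to edges entering $\cup \mathcal{F}_{2}$), so that $e_{E}(\mathcal{F}_{1})=\sum_{X \in \mathcal{F}_{1}}d_{A''}^{-}(X)$, $e_{E}(\mathcal{F}_{2})=\sum_{X \in \mathcal{F}_{2}}d_{A''}^{-}(X)+|E(\mathcal{F}_{1},\mathcal{F}_{2})|$, and $e_{E}(\mathcal{F}_{j})=\sum_{X \in \mathcal{F}_{j}}d_{A''}^{-}(X)$ for $j=3,4$; the whole claim then follows from a single telescoping of the submodularity of $d_{A\cup A''}^{-}$ along the PIEO chain, with the surplus $|E(\mathcal{F}_{1},\mathcal{F}_{2})|$ produced by the identity for $e_{E}(\mathcal{F}_{2})$ rather than by an edge-by-edge count. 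Your split (submodular telescoping for $d_{A}^{-}$ alone, plus a per-edge tally of $\beta(\mathcal{F}_{1},e)+\beta(\mathcal{F}_{2},e)-\beta(\mathcal{F}_{3},e)-\beta(\mathcal{F}_{4},e)$ over the four regions $U_{0},U_{1},U_{2},U_{12}$) in fact proves a slightly stronger, purely combinatorial edgewise statement, at the cost of the structural facts about $\mathcal{G}_{n}$; the paper's orientation trick buys brevity and avoids all case analysis. One point in your plan needs sharpening: in the subcase $u,v\in U_{12}$ with $\beta(\mathcal{F}_{3},e)=0$, $\beta(\mathcal{F}_{4},e)=1$, the phrase ``they remain simultaneously in some intersection-type member'' is not quite the right invariant, since the two sets of $\mathcal{G}_{0}$ containing both $u$ and $v$ may never be merged with each other and no intersection containing the pair need ever be formed. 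The invariant to carry along the chain is that $\mathcal{G}_{i}$ contains two members (counted with multiplicity) each containing both $u$ and $v$; this is preserved by every Type $1$ PIEO, and at the end, since the members of $\mathcal{F}_{3}$ are pairwise disjoint (Proposition~\ref{17-P}(i)) and both surviving members contain $u$, at least one of them is non-maximal and hence lies in $\mathcal{F}_{4}$, giving the contradiction with $\beta(\mathcal{F}_{4},e)=1$. With that adjustment (and the routine induction showing every member of $\mathcal{G}_{0}$ is contained in some member of $\mathcal{F}_{3}$, which you use for the other cases), your argument goes through.
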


\begin{proof}
Define  $A''$ is an orientation of  $E$ as following:
\begin{itemize}
	\item if $e=uv \in E$ satisfied that $u \notin \cup \mathcal{F}_{1}$ and $v \in \cup \mathcal{F}_{1}$, orient $e$ from $u$ to $v$ in $A''$; 
	\item else if $e=uv \in E$ satisfied that $u \notin  \cup \mathcal{F}_{2} $ and $v \in \cup \mathcal{F}_{2}$, orient $e$ from $u$ to $v$ in $A''$; 
	\item else, orient the rest of $E$ arbitrarily in $A''$. 
\end{itemize} 
Then $e_{E}(\mathcal{F}_{1})= \sum_{X \in \mathcal{F}_{1}} d_{A''}^{-}(X)$, and $e_{E}(\mathcal{F}_{2})= $ $\sum_{X \in \mathcal{F}_{2}} $ $d_{A''}^{-}(X) $ $+ |E(\mathcal{F}_{1}, \mathcal{F}_{2})|$. 
Hence,
\begin{equation} \label{left}
\begin{split}
& e_{E}(\mathcal{F}_{1})+ \sum_{X \in \mathcal{F}_{1}}d_{A}^{-}(X) + e_{E}(\mathcal{F}_{2})+ \sum_{X \in \mathcal{F}_{2}}d_{A}^{-}(X)\\
 = ~ & \sum_{X \in \mathcal{F}_{1}} d_{A''}^{-}(X) + \sum_{X \in \mathcal{F}_{1}}d_{A}^{-}(X) + \sum_{X \in \mathcal{F}_{2}} d_{A''}^{-}(X) + |E(\mathcal{F}_{1}, \mathcal{F}_{2})| +\sum_{X \in \mathcal{F}_{2}}  d_{A}^{-}(X) \\
 = ~ & \sum_{X \in \mathcal{F}_{1}} d_{A \cup A''}^{-}(X)+ \sum_{X \in \mathcal{F}_{2}} d_{A \cup A''}^{-}(X) + |E(\mathcal{F}_{1}, \mathcal{F}_{2})| \\
 = ~ & \sum_{X \in \mathcal{G}_{0}}d_{A \cup A''}^{-}(X) + |E(\mathcal{F}_{1}, \mathcal{F}_{2})|  ~~~~~~~~~~~~~~~~~~~~~~~~~ (\text{since } \mathcal{G}_{0}=\mathcal{F}_{1} \uplus \mathcal{F}_{2}).  
\end{split}
\end{equation}


By Claim~\ref{fact} $(ii)$, $\cup \mathcal{F}_{3}=(\cup \mathcal{F}_{1}) \cup (\cup \mathcal{F}_{2})$.   
For every $e=uv \in E$ such that $u \notin (\cup \mathcal{F}_{1}) \cup  (\cup \mathcal{F}_{2})$ and $v \in (\cup \mathcal{F}_{1}) \cup (\cup \mathcal{F}_{2})$, by the definition of $A''$, $e$ is oriented from $u$ to $v$. Therefore  $e_{E}(\mathcal{F}_{3})= \sum_{X \in \mathcal{F}_{3}} d_{ A''}^{-}(X)$.  

By Proposition~\ref{17-P}, $\cup \mathcal{F}_{4}=(\cup \mathcal{F}_{1}) \cap (\cup \mathcal{F}_{2})$. 
For every $e=uv \in E$ such that $u \notin (\cup \mathcal{F}_{1}) \cap  (\cup \mathcal{F}_{2})$ and $v \in (\cup \mathcal{F}_{1}) \cap (\cup \mathcal{F}_{2})$, by the definition of $A''$, $e$ is oriented from $u$ to $v$. 
Therefore  $e_{E}(\mathcal{F}_{4})= \sum_{X \in \mathcal{F}_{4}} d_{A''}^{-}(X)$.  
Hence,
\begin{equation}\label{right}
\begin{split}
&e_{E}(\mathcal{F}_{3})+ \sum_{X \in \mathcal{F}_{3}}d_{A}^{-}(X) + e_{E}(\mathcal{F}_{4})+ \sum_{X \in \mathcal{F}_{4}}d_{A}^{-}(X)\\
= ~ & \sum_{X \in \mathcal{F}_{3}}d_{A''}^{-}(X)+ \sum_{X \in \mathcal{F}_{3}}d_{A}^{-}(X) + \sum_{X \in \mathcal{F}_{4}}d_{A''}^{-}(X)+ \sum_{X \in \mathcal{F}_{4}}d_{A}^{-}(X)\\
= ~  & \sum_{X \in \mathcal{F}_{3}} d_{A \cup A''}^{-}(X)+ \sum_{X \in \mathcal{F}_{4}} d_{A \cup A''}^{-}(X) \\
 = ~ & \sum_{X \in \mathcal{G}_{n}} d_{A \cup A''}^{-}(X)  ~~~~~~~~~~~~~~~~~~~~~~~~~ (\text{since } \mathcal{G}_{n}=\mathcal{F}_{3} \uplus \mathcal{F}_{4}).   
\end{split}
\end{equation}

In the process of PIEOs, for $i \in [n]$, suppose we obtain $\mathcal{G}_{i}$ by replacing a properly intersecting pair $X$ and $Y$ in $\mathcal{G}_{i-1}$ with $ X \cup Y$ and $X \cap Y$. 
Then $\mathcal{G}_{i-1} \setminus \{X, Y \}= \mathcal{G}_{i} \setminus \{X \cup Y, X \cap Y \}$.  
Since $d_{A \cup A''}^{-}$ is submodular on $2^{V}$, $d_{A \cup A''}^{-}(X)+ d_{A \cup A''}^{-}(Y) \geq d_{A \cup A''}^{-}(X\cup Y )+ d_{A \cup A''}^{-}(X \cap Y) $.   
Therefore $ \sum_{X \in \mathcal{G}_{i-1}} d_{A \cup A''}^{-}(X)  \geq \sum_{X \in \mathcal{G}_{i}} d_{A \cup A''}^{-}(X)$. 
It follows that 
\begin{equation}\label{G0Gn}
 \sum_{X \in \mathcal{G}_{0}} d_{A \cup A''}^{-}(X) \geq \sum_{X \in \mathcal{G}_{1}} d_{A \cup A''}^{-}(X) \geq \ldots \geq  \sum_{X \in \mathcal{G}_{n}} d_{A \cup A''}^{-}(X).
\end{equation}

Hence, we have
\[
\begin{split}
& e_{E}(\mathcal{F}_{1})+ \sum_{X \in \mathcal{F}_{1}}d_{A}^{-}(X) + e_{E}(\mathcal{F}_{2})+ \sum_{X \in \mathcal{F}_{2}}d_{A}^{-}(X) \\
= ~ & \sum_{X \in \mathcal{G}_{0}}d_{A \cup A''}^{-}(X) + |E(\mathcal{F}_{1}, \mathcal{F}_{2})| ~~~~~~~~~~~~~~~~~~~~~~~~~~~~~~~~~~~~~~~~~~ (\text{by (\ref{left})}) \\ 
\geq ~ &  \sum_{X \in \mathcal{G}_{n}}d_{A \cup A''}^{-}(X) + |E(\mathcal{F}_{1}, \mathcal{F}_{2})| ~~~~~~~~~~~~~~~~~~~~~~~~~~~~~~~~~~~~~~~~~~ (\text{by (\ref{G0Gn})}) \\
= ~ & e_{E}(\mathcal{F}_{3})+ \sum_{X \in \mathcal{F}_{3}}d_{A}^{-}(X)+ e_{E}(\mathcal{F}_{4})+ \sum_{X \in \mathcal{F}_{4}}d_{A}^{-}(X) + |E(\mathcal{F}_{1}, \mathcal{F}_{2})| ~~~ (\text{by (\ref{right})}).
\end{split}
\] 
\end{proof} 

The following lemma will be used in the final step to explain why we can orient an edge $e \in E$ to take care of all these critical subpartitions in $\mathcal{E}^{1}$ and $\mathcal{E}^{2}$. 

\begin{lemma}\label{emptyset} 
	For $\mathcal{F}_{1}, \mathcal{F}_{2} \in \mathcal{E}^{1} \cup \mathcal{E}^{2}$, we have
$E(\mathcal{F}_{1}, \mathcal{F}_{2}) = \emptyset$.
\end{lemma}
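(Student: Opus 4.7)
The plan is to use Claim~\ref{>=|E|} as the driver: the left-hand side there is exactly what the memberships in $\mathcal{E}^{1}\cup\mathcal{E}^{2}$ let us evaluate, while the $\mathcal{F}_{3},\mathcal{F}_{4}$ contributions on the right can be bounded below by hypotheses (ii) and (iii) of Theorem~\ref{mix-spanning}. Since $|E(\mathcal{F}_{1},\mathcal{F}_{2})|\ge 0$, it suffices to show in each case that the right-hand side of Claim~\ref{>=|E|} without that term is already at least its left-hand side; the inequality then forces $|E(\mathcal{F}_{1},\mathcal{F}_{2})|\le 0$ and hence $=0$. Throughout I would use Claim~\ref{fact} for $|\mathcal{F}_{1}|+|\mathcal{F}_{2}|=|\mathcal{F}_{3}|+|\mathcal{F}_{4}|$ and $\cup\mathcal{F}_{3}=(\cup\mathcal{F}_{1})\cup(\cup\mathcal{F}_{2})$, together with Proposition~\ref{17-P}(ii) for $\cup\mathcal{F}_{4}=(\cup\mathcal{F}_{1})\cap(\cup\mathcal{F}_{2})$, which applies because the Process of PIEOs uses only Type~$1$ operations. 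I would split into three cases by where $\mathcal{F}_{1},\mathcal{F}_{2}$ sit.

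In the case $\mathcal{F}_{1},\mathcal{F}_{2}\in\mathcal{E}^{1}$, writing $t_{i}:=|\mathcal{F}_{i}|$, the left-hand side of Claim~\ref{>=|E|} equals $k(t_{1}+t_{2}-2)+\widetilde{f}(V\setminus\cup\mathcal{F}_{1})+\widetilde{f}(V\setminus\cup\mathcal{F}_{2})$ by definition. Modularity of $\widetilde{f}$ combined with the identities $(V\setminus\cup\mathcal{F}_{1})\cup(V\setminus\cup\mathcal{F}_{2})=V\setminus\cup\mathcal{F}_{4}$ and $(V\setminus\cup\mathcal{F}_{1})\cap(V\setminus\cup\mathcal{F}_{2})=V\setminus\cup\mathcal{F}_{3}$ rewrites this as $k(t_{3}+t_{4}-2)+\widetilde{f}(V\setminus\cup\mathcal{F}_{3})+\widetilde{f}(V\setminus\cup\mathcal{F}_{4})$, which is $\le e_{E}(\mathcal{F}_{3})+\sum_{X\in\mathcal{F}_{3}}d_{A}^{-}(X)+e_{E}(\mathcal{F}_{4})+\sum_{X\in\mathcal{F}_{4}}d_{A}^{-}(X)$ by applying hypothesis~(ii) of Theorem~\ref{mix-spanning} to each of $\mathcal{F}_{3},\mathcal{F}_{4}$; the degenerate instance $t=0$ of (ii) is precisely hypothesis~(i). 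The case $\mathcal{F}_{1},\mathcal{F}_{2}\in\mathcal{E}^{2}$ is entirely parallel, using the modularity of $\widetilde{g}$, the identities $(\cup\mathcal{F}_{1})\cup(\cup\mathcal{F}_{2})=\cup\mathcal{F}_{3}$ and $(\cup\mathcal{F}_{1})\cap(\cup\mathcal{F}_{2})=\cup\mathcal{F}_{4}$, and hypothesis~(iii).

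The mixed case $\mathcal{F}_{1}\in\mathcal{E}^{1}$, $\mathcal{F}_{2}\in\mathcal{E}^{2}$ (the symmetric version is analogous) is the substantive point and the main obstacle: the left-hand side of Claim~\ref{>=|E|} is the awkward combination $k(t_{1}+t_{2}-1)+\widetilde{f}(V\setminus\cup\mathcal{F}_{1})-\widetilde{g}(\cup\mathcal{F}_{2})$, and it is not a priori clear which hypothesis to apply to which of $\mathcal{F}_{3},\mathcal{F}_{4}$. The right choice is to apply (ii) to $\mathcal{F}_{3}$ and (iii) to $\mathcal{F}_{4}$, producing the lower bound $k(t_{3}+t_{4}-1)+\widetilde{f}(V\setminus\cup\mathcal{F}_{3})-\widetilde{g}(\cup\mathcal{F}_{4})$. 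After the $k$-terms cancel via Claim~\ref{fact}(i), it remains to check $\widetilde{f}(V\setminus\cup\mathcal{F}_{3})-\widetilde{g}(\cup\mathcal{F}_{4})\ge\widetilde{f}(V\setminus\cup\mathcal{F}_{1})-\widetilde{g}(\cup\mathcal{F}_{2})$. Modularity and the identities $\cup\mathcal{F}_{3}\setminus\cup\mathcal{F}_{1}=\cup\mathcal{F}_{2}\setminus\cup\mathcal{F}_{1}$ and $\cup\mathcal{F}_{2}\setminus\cup\mathcal{F}_{4}=\cup\mathcal{F}_{2}\setminus\cup\mathcal{F}_{1}$ collapse this to $\widetilde{g}(\cup\mathcal{F}_{2}\setminus\cup\mathcal{F}_{1})\ge\widetilde{f}(\cup\mathcal{F}_{2}\setminus\cup\mathcal{F}_{1})$, which is immediate from the hypothesis $f\le g$. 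This is exactly the step that needs $f\le g$ and explains why the choice of (ii) on $\mathcal{F}_{3}$ paired with (iii) on $\mathcal{F}_{4}$ is essentially forced.
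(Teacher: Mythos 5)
Your proposal is correct and follows essentially the same route as the paper: it combines Claim~\ref{>=|E|} with hypotheses (\ref{ine-f}) and (\ref{ine-g}) applied to $\mathcal{F}_{3}$ and $\mathcal{F}_{4}$, uses the same modularity identities from Claim~\ref{fact} and Proposition~\ref{17-P}, and in the mixed case reduces to $\widetilde{g}(\cup\mathcal{F}_{2}\setminus\cup\mathcal{F}_{1})\ge\widetilde{f}(\cup\mathcal{F}_{2}\setminus\cup\mathcal{F}_{1})$ exactly as in the paper's inequality (\ref{f<=g}). The only differences are cosmetic (a direct argument instead of the paper's contradiction, and your careful remark that the $t=0$ instance of (ii) is hypothesis (i)).
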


\begin{proof}
Suppose to the contrary that $E(\mathcal{F}_{1}, \mathcal{F}_{2}) \neq \emptyset$. Then, by Claim~\ref{>=|E|}, we have
\begin{equation}\label{>}
\begin{split}
& e_{E}(\mathcal{F}_{1})+ \sum_{X \in \mathcal{F}_{1}}d_{A}^{-}(X) + e_{E}(\mathcal{F}_{2})+ \sum_{X \in \mathcal{F}_{2}}d_{A}^{-}(X) \\
> ~ & e_{E}(\mathcal{F}_{3})+ \sum_{X \in \mathcal{F}_{3}}d_{A}^{-}(X)+ e_{E}(\mathcal{F}_{4})+ \sum_{X \in \mathcal{F}_{4}}d_{A}^{-}(X).
 \end{split}
\end{equation}

\noindent \textbf{Case 1:} Assume $\mathcal{F}_{1}, \mathcal{F}_{2} \in \mathcal{E}^{1}$.

By Claim~\ref{fact} $(ii)$, $V \setminus \cup \mathcal{F}_{3}$  $ = (V \setminus \cup \mathcal{F}_{1}) \cap (V \setminus \cup \mathcal{F}_{2})$.   By Proposition~\ref{17-P}, $V \setminus \cup \mathcal{F}_{4}$ $= (V \setminus \cup \mathcal{F}_{1}) \cup (V \setminus \cup \mathcal{F}_{2})$. 
Thus, 
\begin{equation}\label{F1F2toF3F4}
  \widetilde{f}(V \setminus \cup \mathcal{F}_{3}) + \widetilde{f}(V \setminus \cup \mathcal{F}_{4})= \widetilde{f}(V \setminus \cup \mathcal{F}_{1}) + \widetilde{f}(V \setminus \cup \mathcal{F}_{2}).
\end{equation}
Hence,
\begin{align*}
& e_{E}(\mathcal{F}_{3})+ \sum_{X \in \mathcal{F}_{3}}d_{A}^{-}(X) + e_{E}(\mathcal{F}_{4})+ \sum_{X \in \mathcal{F}_{4}}d_{A}^{-}(X)\\
\geq ~ &  k(|\mathcal{F}_{3}|-1)+ \widetilde{f}(V \setminus \cup \mathcal{F}_{3}) + k(|\mathcal{F}_{4}|-1)+ \widetilde{f}(V \setminus \cup \mathcal{F}_{4}) ~~~ (\text{by } (\ref{ine-f}))\\
 = ~ & k(|\mathcal{F}_{1}|-1)+ \widetilde{f}(V \setminus \cup \mathcal{F}_{1}) + k(|\mathcal{F}_{2}|-1)+ \widetilde{f}(V \setminus \cup \mathcal{F}_{2}) ~~~ (\text{by } (\ref{F1F2toF3F4}) \text{ and  Claim} ~ (\ref{fact}) (i))\\ 
 = ~ & e_{E}(\mathcal{F}_{1})+ \sum_{X \in \mathcal{F}_{1}}d_{A}^{-}(X) + e_{E}(\mathcal{F}_{2})+ \sum_{X \in \mathcal{F}_{2}}d_{A}^{-}(X)  ~~~~~~~~~~~~~~ (\text{since } \mathcal{F}_{1}, \mathcal{F}_{2} \in \mathcal{E}^{1}), 
\end{align*} 
but this is a contradiction to (\ref{>}).  

\smallskip

\noindent \textbf{Case 2:} Assume $\mathcal{F}_{1}, \mathcal{F}_{2} \in \mathcal{E}^{2}$.

The proof will use the function $g$  and the assumption (\ref{ine-g}),  also the definition of $\mathcal{E}^{2}$. The process is similar to Case 1,  details are skipped here.    

\smallskip

\noindent \textbf{Case 3:}  Assume $\mathcal{F}_{1} \in  \mathcal{E}^{1}$ and $\mathcal{F}_{2} \in \mathcal{E}^{2}$. 

By Claim~\ref{fact} (ii), $ V \setminus \cup \mathcal{F}_{3} \subseteq V \setminus \cup \mathcal{F}_{1}$, and $(V \setminus \cup \mathcal{F}_{1}) \setminus (V \setminus \cup \mathcal{F}_{3})=\cup \mathcal{F}_{3} \setminus \cup \mathcal{F}_{1}= ((\cup \mathcal{F}_{1}) \cup (\cup \mathcal{F}_{2}) ) \setminus \cup \mathcal{F}_{1}=\cup \mathcal{F}_{2} \setminus \cup \mathcal{F}_{1} $.  
By Proposition~\ref{17-P}, $\cup \mathcal{F}_{4} \subseteq \cup \mathcal{F}_{2} $, and $\cup \mathcal{F}_{2} \setminus \cup \mathcal{F}_{4}=  \cup \mathcal{F}_{2} \setminus ( (\cup \mathcal{F}_{1}) \cap (\cup \mathcal{F}_{2}))=\cup \mathcal{F}_{2} \setminus \cup \mathcal{F}_{1} $. 
Thus $(V \setminus \cup \mathcal{F}_{1}) \setminus (V \setminus \cup \mathcal{F}_{3})=\cup \mathcal{F}_{2} \setminus \cup \mathcal{F}_{4}$.  

By the assumption $f\leq g$, using $V \setminus \cup \mathcal{F}_{3} \subseteq V \setminus \cup \mathcal{F}_{1}$ and $\cup \mathcal{F}_{4} \subseteq \cup \mathcal{F}_{2} $, we have  
\begin{equation} \label{f<=g}
\begin{split}
 & \widetilde{f}(V \setminus \cup \mathcal{F}_{1})-\widetilde{f}(V \setminus \cup \mathcal{F}_{3})  =  \widetilde{f}((V \setminus \cup \mathcal{F}_{1}) \setminus (V \setminus \cup \mathcal{F}_{3})) ~~ \\
 \leq ~ & \widetilde{g}((V \setminus \cup \mathcal{F}_{1}) \setminus (V \setminus \cup \mathcal{F}_{3})) =  \widetilde{g}(\cup \mathcal{F}_{2} \setminus \cup \mathcal{F}_{4}) = \widetilde{g}(\cup \mathcal{F}_{2})- \widetilde{g}(\cup \mathcal{F}_{4}).
\end{split}
\end{equation}

Hence,
\begin{align*}
& e_{E}(\mathcal{F}_{3})+ \sum_{X \in \mathcal{F}_{3}}d_{A}^{-}(X) + e_{E}(\mathcal{F}_{4})+ \sum_{X \in \mathcal{F}_{4}}d_{A}^{-}(X)\\
\geq ~ &  k(|\mathcal{F}_{3}|-1)+ \widetilde{f}(V \setminus \cup \mathcal{F}_{3}) + k|\mathcal{F}_{4}|- \widetilde{g}( \cup \mathcal{F}_{4})    ~~~~~~~~~~ \text{~(by (\ref{ine-f}) and (\ref{ine-g}))}\\
 \geq ~ & k(|\mathcal{F}_{1}|-1)+ \widetilde{f}(V \setminus \cup \mathcal{F}_{1}) + k|\mathcal{F}_{2}|- \widetilde{g}(\cup \mathcal{F}_{2})  ~~~~~~~~~~ \text{~(by (\ref{f<=g}) and Claim~\ref{fact} (i))}\\
= ~ & e_{E}(\mathcal{F}_{1})+ \sum_{X \in \mathcal{F}_{1}}d_{A}^{-}(X) + e_{E}(\mathcal{F}_{2})+ \sum_{X \in \mathcal{F}_{2}}d_{A}^{-}(X)  ~~~~~~~~ \text{~(since $\mathcal{F}_{1} \in \mathcal{E}^{1}$, $\mathcal{F}_{2} \in \mathcal{E}^{2}$)},  
\end{align*}
but this is a contradiction to (\ref{>}). This proves the lemma. 
\end{proof}

To finish the proof, we pick an edge $e_{0} \in E$, orient $e_0$ to  $\vec{e}_0$ as following:  
If there exists an $\mathcal{F}_{0} \in \mathcal{E}^{1} \cup \mathcal{E}^{2}$ such that one end of $e_{0}$, say $v \in \cup \mathcal{F}_{0}$ and the other end $u \notin \cup \mathcal{F}_{0}$, then we orient $e_{0}$ from $u$ to $v$ (i.e., $\vec{e}_0 := \overrightarrow{uv}$); otherwise, orient $e_{0}$ to $\vec{e}_0$ arbitrarily. 
Then we do  $A:= A+\vec{e}_0$, $E:=E-e_0$, $F':=(V; A, E)$. 
It suffices for us to prove that  for any $\mathcal{F} \in \mathcal{D}(V)$, assumptions (\ref{ine-f}) and (\ref{ine-g}) still hold for this new mixed graph $F'$.      

Note that the subpartitions in $\mathcal{E}^{1}$ and $\mathcal{E}^{2}$ are the ones that make assumptions (\ref{ine-f}) or (\ref{ine-g}) tight in the mixed graph $F$. 
If $\mathcal{F} \notin \mathcal{E}^{1} \cup \mathcal{E}^{2}$, since $e_{E}(\mathcal{F})+ \sum_{X \in \mathcal{F}}d_{A}^{-}(X)$ is decreased  by at most $1$, (\ref{ine-f}) and (\ref{ine-g}) still hold in $F'$. 
Otherwise, we have $\mathcal{F} \in \mathcal{E}^{1} \cup \mathcal{E}^{2}$. Then we prove (next)  that $e_{E}(\mathcal{F})+ \sum_{X \in \mathcal{F}}d_{A}^{-}(X)$ keeps the same in $F$ and $F'$.
 Thus (\ref{ine-f}) and (\ref{ine-g}) still hold in $F'$.   

Suppose $\vec{e}_0 = \overrightarrow{uv}$ in $F'$.   
If both $u, v \in X$ for some $X \in \mathcal{F}$, or both $u, v \notin \cup \mathcal{F}$, then $e_{E}(\mathcal{F})$ and $\sum_{X \in \mathcal{F}} d_{A}^{-}(X)$ keep the same in $F$ and $F'$.   

If, for some $X, Y \in \mathcal{F}$ and $X\neq Y$, $v \in X$ and $u \in Y $,  then $e_{E}(\mathcal{F})$ is decreased by $1$ and $\sum_{X \in \mathcal{F}} d_{A}^{-}(X)$ is increased by $1$ in $F'$. Therefore  $e_{E}(\mathcal{F})+ \sum_{X \in \mathcal{F}}d_{A}^{-}(X)$ keeps the same in $F$ and $F'$. 

The left cases are either (A) $v \in \cup \mathcal{F}$ and $u \notin \cup \mathcal{F}$; or (B) $u \in \cup \mathcal{F}$ and $v \notin \cup \mathcal{F}$. But Case (B) can not happen.    
Indeed, assume to the contrary that $u \in \cup \mathcal{F}$ and $v \notin \cup \mathcal{F}$. Since $\vec{e}_0$ is oriented from $u$ to $v$,  there exists an  $\mathcal{F}_{0} \in \mathcal{E}^{1} \cup \mathcal{E}^{2}$ such that $v \in \cup \mathcal{F}_{0}$ and $u \notin \cup \mathcal{F}_{0}$. we conclude that $e_{0} \in E(\mathcal{F}_{0}, \mathcal{F})$. But by Lemma~\ref{emptyset}, $E(\mathcal{F}_{0}, \mathcal{F})=\emptyset$. This is a contradiction.   
So the only left case is (A) $v \in \cup \mathcal{F}$ and $u \notin \cup \mathcal{F}$. Then $e_{E}(\mathcal{F})$ is decreased by $1$ and $\sum_{X \in \mathcal{F}} d_{A}^{-}(X)$ is increased by $1$ in $F'$. Therefore  $e_{E}(\mathcal{F})+ \sum_{X \in \mathcal{F}}d_{A}^{-}(X)$ keeps the same in $F$ and $F'$. This finishes the proof of Theorem \ref{mix-spanning}.

\end{document}